\theoremstyle{plain}
\newtheorem{lemma}{Lemma}[section]
\newtheorem{proposition}[lemma]{Proposition}
\newtheorem{theorem}[lemma]{Theorem}
\newtheorem{corollary}[lemma]{Corollary}
\newtheorem{definition}[lemma]{Definition}
\theoremstyle{definition}
\newtheorem{example}[lemma]{Example}
\theoremstyle{remark}
\newtheorem{remark}[lemma]{Remark}
\newcommand{\FF}{\mathbb F}
\newcommand{\NN}{\mathbb N}
\newcommand{\PP}{\mathbb P}
\newcommand{\ZZ}{\mathbb Z}
\newcommand{\cB}{\mathcal B}
\newcommand{\cE}{\mathcal E}
\newcommand{\cF}{\mathcal F}
\newcommand{\cL}{\mathcal L}
\newcommand{\cO}{\mathcal O}
\newcommand{\cV}{\mathcal V}
\newcommand{\imic}{\cong}
\newcommand{\To}{\longrightarrow}
\newcommand{\Pic}{\mathop{\mathrm {Pic}}\nolimits}
\newcommand{\Proj}{\mathop{\null\mathrm {Proj}}\nolimits}
\newcommand{\Sym}{\mathop{\mathrm {Sym}}\nolimits}
\newcommand{\Hom}{\mathop{\mathrm {Hom}}\nolimits}
\newcommand{\Reg}{\mathop{\mathrm {Reg}}\nolimits}
\renewcommand{\Im}{\mathop{\mathrm {Im}}\nolimits}
\newcommand{\bdp}{\mathbf p}
\newcommand{\bdc}{\mathbf c}
\newcommand{\bdm}{\mathbf m}
\newcommand{\bdh}{\mathbf h}
\newcommand{\bdf}{\mathbf f}
\newcommand\tw[2]{\!\left[\begin{smallmatrix}{#1}\\{#2}\end{smallmatrix}\right]}
\newcommand{\Hyp}{{\bdh}}
\newcommand{\fib}{{\bdf}}
\newcommand{\fdiv}{{F}}
\newcommand{\hafdiv}{{S}}
\newcommand{\hdiv}{{H}}
\newcommand{\bdcC}{{\bf C}}
\renewcommand\@makefntext[1]{%
\setlength\parindent{1em}%
\noindent
\mbox{\@thefnmark}{#1}}
\begin{document}
\title{Castelnuovo-Mumford Regularity over Scrolls and Splitting Criteria}
\author{F.~Malaspina and G.K.~Sankaran
\vspace{6pt}\\
{\small  Politecnico di Torino}\\
{\small\it Corso Duca degli Abruzzi 24, 10129 Torino, Italy}\\
{\small\it e-mail: francesco.malaspina@polito.it}\\
\vspace{6pt}\\
{\small University of Bath}\\
 {\small\it Bath BA2 7AY , England}\\
{\small\it e-mail: G.K.Sankaran@bath.ac.uk}}
\maketitle
\def\thefootnote{\null}
\footnote{Mathematics Subject Classification 2020: 14F05, 14J60.
\\ Keywords: Castelnuovo-Mumford regularity, projective bundles,
splitting criteria.}
\begin{center}{\large Dedicated to the memory of Gianfranco Casnati}
\end{center}

\begin{abstract}\noindent
We introduce and study a notion of Castelnuovo-Mumford regularity
suitable for scrolls obtained as projectivisations of sums of line
bundles on $\PP^m$. We show that this is a natural generalisation of
the well known regularity on projective and multiprojective spaces and
we prove Horrocks-type splitting criteria for vector bundles. 
\end{abstract}

\section*{Introduction}\label{sect:intro}

Castelnuovo-Mumford regularity was defined initially on projective
space, but many generalizations exist to other varieties, mostly
rational varieties. Typically, these all reduce to classical
Castelnuovo-Mumford regularity on $\PP^N$ but are otherwise defined
independently, depending on the class of varieties to be studied. In
particular, if some variety falls into more than one such class, the
competing definitions may not agree. Over the years, extensions of
this notion have been proposed to handle other ambient varieties
beyond projective space, including Grassmannians of lines \cite{am2},
quadrics \cite{bm3}, multiprojective spaces \cite{bm2,cm2, hw},
$n$-dimensional smooth projective varieties with an $n$-block
collection~\cite{cm2}, and weighted projective spaces \cite{ms}.

A very general definition, for all simplicial toric varieties (and
more) was given by Maclagan and Smith in \cite{McSm}. Because of its
wide scope, that definition is not always the most suitable in
particular cases, so it makes sense to consider the possibilities for
toric varieties of a special type, even though the definition of
\cite{McSm} applies to them.

We work with scrolls: that is, projectivisations of sums of line
bundles on $\PP^m$. This is a natural and widespread generalisation of
the classical notion of scroll (which is the case $m=1$). If
$X=\PP\cV$ is a scroll in this sense then $\rho(X)=2$ and the Picard
group has obvious generators $\fib$, the pullback of $\cO_{\PP^m}(1)$,
and $\Hyp$, the relatively ample line bundle $\cO_{\PP\cV}(1)$. In
fact it was shown in~\cite{Kl} that these are exactly the smooth toric
varieties of Picard rank~2.

These choices allow us to give a definition of Castelnuovo-Mumford
regularity that has many of the properties of regularity proved by
Mumford in \cite{Mu} for projective space.  In particular we show that
also for our notion of regularity, a regular coherent sheaf is
globally generated. We compare our version with the regularity of
Maclagan and Smith (with suitable choices) in this case.

In the second part of the paper we use the new notion of regularity to
prove splitting criteria for vector bundles analogous to those of
Horrocks (see \cite{Ho}) on projective spaces.  We compare our
splitting criteria with those obtained for the same varieties by Brown
and Sayrafi~\cite{BS} (recently extended to arbitrary smooth
projective toric varieties in \cite{Sa}).

Both the notion of regularity and the splitting criteria are simpler
for rational normal scrolls, i.e.\ when $m=1$.

Smooth toric varieties of low Picard rank were described more fully in
\cite{Bat}. Also in \cite{Bat} there is a characterisation of
polyscrolls (recursive definition: a point is a polyscroll and if $X$
is a polyscroll and $\cV=\bigoplus \cL_i$ is a direct sum of line
bundles on $X$ then $X'=\PP\cV$ is a polyscroll). It is possible that
our definitions and some of our results can be extended to
polyscrolls.

\section{Scrolls and Regularity}\label{sect:scrolls}

We fix a decomposible vector bundle $\cV=\bigoplus\nolimits_{i=0}^n
\cO_{\PP^m}(a_i)$ of rank $n+1$ on $\PP^m$: we assume throughout that
$a_0\le a_1\le\ldots\le a_n$. The associated projective space bundle
$X:=\PP\cV$ is by definition $\Proj(\Sym \cV)$, adopting the
notational conventions of \cite[Section~II.7]{Hartshorne}. The
associated line bundle $\cO_X(1)$ is relatively ample over $\PP^m$,
and is ample on $X$ if $a_0>0$. We put $c:=\sum_{i=0}^n a_i$ and we let
$\pi\colon \PP (\cV) \to \PP^m$ be the projection. We denote by $\Hyp$
and $\fib$ the classes in $\Pic X$ of $\cO_{\PP(\cV)}(1)$
and the pullback $\pi^*\cO_{\PP^m}(1)$, respectively.  

\begin{definition}\label{def:scroll}
If $X=\PP\cV$ with $\cV$ as above, we call the pair $(X,\cV)$ an
\emph{abstract scroll} or simply a \emph{scroll}. If $a_0\ge 0$
we say that $(X,\cV)$ is a \emph{semipositive scroll}, or a
\emph{positive scroll} if $a_0>0$.
\end{definition}

Note that $\Hyp$ depends on $\cV$ rather than only on $X$. If
$\cV'=\cV\otimes \cO_{\PP^m}(w)=\bigoplus \cO_{\PP^m}(a_i+w)$ then
$\PP\cV'\imic X$ but $c'=c+(n+1)w$ and $\Hyp'=\Hyp+w\fib$. In
particular, this, and the fact that $\Hyp$ is ample if and
only if $a_0>0$, allow us to recover $\cV$ given $X$ and $\Hyp$.

If $(X,\cV)$ is a positive scroll then $\Hyp$ is globally generated
and the image $\phi_{|\Hyp|}(X)$ is a geometric scroll, i.e.\ a
subvariety of some $\PP^N$ in which the fibres of $\pi$ appear as
linear projective subspaces.

For conciseness, if $\cF$ is a sheaf on $X$ we will often write
$\cF\tw{a}{b}:=\cF(a\Hyp+b\fib)$.

If $I\subseteq \{0,\dots ,n\}$ we write $|I|$ for the cardinality of
$I$ and we set $a_I=\sum_{i\in I}a_i$.

With this notation, we have $\omega_X \cong \cO_X\tw{-(n+1)}{c-1-m}$.

The following two easy lemmas are useful for computation.
\begin{lemma}\label{lem:vanishing}
Let $X$ be a scroll. 
\begin{enumerate}[(i)]
\item $H^i(X, \cO_X\tw{a}{b}) \cong H^i(\PP^m, \Sym^a\cV \otimes
  \cO_{\PP^m}(b))$ if $a\ge 0$;
\item $H^i(X, \cO_X\tw{a}{b})) =0$ if $-n\le a<0$;
\item $H^i(X, \cO_X\tw{a}{b}) \cong H^{n+m-i}(\PP^m, \Sym^{-a-n-1}\cV
  \otimes \cO_{\PP^m}(c-b-1-m))$ if $a<-n$.
\end{enumerate}
\end{lemma}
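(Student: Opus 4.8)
The plan is to compute everything by pushing forward along the projection $\pi\colon X\to\PP^m$, whose fibres are copies of $\PP^n$, and then feeding the result into the Leray spectral sequence. Since $\cO_X\tw{a}{b}=\cO_X(a)\otimes\pi^*\cO_{\PP^m}(b)$, the projection formula gives $R^q\pi_*\bigl(\cO_X\tw{a}{b}\bigr)\cong\bigl(R^q\pi_*\cO_X(a)\bigr)\otimes\cO_{\PP^m}(b)$, so the whole computation reduces to the relative cohomology sheaves $R^q\pi_*\cO_X(a)$. These are the standard ones for a projective bundle of relative dimension $n$: using that $H^q\bigl(\PP^n,\cO(a)\bigr)=0$ except when $q=0,\ a\ge 0$ or $q=n,\ a\le -n-1$, one has $\pi_*\cO_X(a)=\Sym^a\cV$ for $a\ge 0$ and $0$ for $a<0$, while $R^q\pi_*\cO_X(a)=0$ for $0<q<n$, and $R^n\pi_*\cO_X(a)$ is supported only in the range $a\le -n-1$.

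With these vanishings the Leray spectral sequence degenerates in each of the three ranges. For (i), when $a\ge 0$, only the $q=0$ row survives, so $H^i\bigl(X,\cO_X\tw{a}{b}\bigr)\cong H^i\bigl(\PP^m,\pi_*\cO_X(a)\otimes\cO_{\PP^m}(b)\bigr)=H^i\bigl(\PP^m,\Sym^a\cV\otimes\cO_{\PP^m}(b)\bigr)$. For (ii), when $-n\le a<0$, the sheaf $\pi_*\cO_X(a)$ vanishes because $a<0$, the middle terms vanish as always, and $R^n\pi_*\cO_X(a)$ vanishes because $a\ge -n>-n-1$; hence every $R^q\pi_*\cO_X\tw{a}{b}=0$ and all the cohomology groups vanish.

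For (iii), when $a<-n$, I would not compute $R^n\pi_*$ directly but instead invoke Serre duality on the smooth projective variety $X$ of dimension $n+m$, together with the formula $\omega_X\cong\cO_X\tw{-(n+1)}{c-1-m}$ recorded above. Serre duality makes $H^i\bigl(X,\cO_X\tw{a}{b}\bigr)$ dual to $H^{n+m-i}\bigl(X,\cO_X\tw{-a}{-b}\otimes\omega_X\bigr)=H^{n+m-i}\bigl(X,\cO_X\tw{-a-n-1}{c-b-1-m}\bigr)$. Since $a\le -(n+1)$ forces the first slot $-a-n-1\ge 0$, part (i) applies to this last group and identifies it with $H^{n+m-i}\bigl(\PP^m,\Sym^{-a-n-1}\cV\otimes\cO_{\PP^m}(c-b-1-m)\bigr)$; as a finite-dimensional vector space this is isomorphic to its own dual, which yields the stated isomorphism.

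The computation is entirely formal once the relative cohomology sheaves are in hand, so I expect no serious conceptual obstacle. The points demanding care are purely bookkeeping: getting the pushforward vanishing ranges exactly right, so that in (ii) every row of the spectral sequence is genuinely killed, and tracking the twists through Serre duality in (iii) so that the shift $\tw{-a-n-1}{c-b-1-m}$ lands precisely in the range where part (i) can be reused.
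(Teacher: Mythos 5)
Your proposal is correct and is essentially the paper's own argument: the paper simply cites \cite[Exercise III.8.4]{Hartshorne}, which is precisely the computation of $R^q\pi_*\cO_X(a)$ that you carry out and feed into the Leray spectral sequence (with relative/Serre duality handling the case $a<-n$). The only cosmetic remark is that in (iii) Serre duality produces the dual of the stated group, which as a finite-dimensional vector space is isomorphic to it, exactly as you note.
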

\begin{proof}
  See \cite[Exercise III.8.4]{Hartshorne}.
\end{proof}
\begin{lemma}\label{lem:Hor}
Again let $X$ be a scroll. Then, for $0<i<n+m=\dim X$,
  \begin{enumerate}[(i)]
    \item if $a\geq 0$, then $H^i(X, \cO_X\tw{a}{b})=0$ for any
      $b\geq -m$.
    \item if $a<-n$, then $H^i(X, \cO_X\tw{a}{b})=0$ for any $b< c$.
  \end{enumerate}
\end{lemma}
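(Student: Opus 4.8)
The plan is to push everything down to $\PP^m$ using Lemma~\ref{lem:vanishing} and reduce to the elementary cohomology of line bundles on projective space, so that the only genuine work sits at a single boundary cohomological degree in each of the two cases.

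For part~(i), with $a\ge 0$, Lemma~\ref{lem:vanishing}(i) identifies $H^i(X,\cO_X\tw{a}{b})$ with $H^i(\PP^m,\Sym^a\cV\otimes\cO_{\PP^m}(b))$. Since $\cV$ is a sum of line bundles, so is $\Sym^a\cV$: the degree-$a$ monomial $x_0^{k_0}\cdots x_n^{k_n}$ (with $\sum_i k_i=a$) contributes the summand $\cO_{\PP^m}(\sum_i k_i a_i)$, so after tensoring with $\cO_{\PP^m}(b)$ I only need to understand $H^i(\PP^m,\cO_{\PP^m}(b+w))$ as $w=\sum_i k_i a_i$ runs over these weights. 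Invoking the standard description of line-bundle cohomology on $\PP^m$ — namely $H^i(\PP^m,\cO_{\PP^m}(d))=0$ for $0<i<m$ (any $d$) and for $i>m$, while $H^m(\PP^m,\cO_{\PP^m}(d))=0$ exactly when $d\ge -m$ — I see that throughout $0<i<n+m$ every degree except $i=m$ vanishes automatically. At $i=m$ the requirement is $b+w\ge -m$ for every summand; since $a_0\le\cdots\le a_n$, the smallest weight is $a a_0\ge 0$, and then the hypothesis $b\ge -m$ closes the case.

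For part~(ii) I would run the identical argument after dualising. With $a<-n$, so that $-a-n-1\ge 0$, Lemma~\ref{lem:vanishing}(iii) rewrites the group as $H^{n+m-i}(\PP^m,\Sym^{-a-n-1}\cV\otimes\cO_{\PP^m}(c-b-1-m))$, again the cohomology of a sum of line bundles on $\PP^m$. Setting $j=n+m-i$ sends the range $0<i<n+m$ to $0<j<n+m$, so only $j=m$, i.e.\ $i=n$, is nontrivial; there the degree bound reads $(c-b-1-m)+(-a-n-1)a_0\ge -m$, equivalently $b\le c-1+(-a-n-1)a_0$, which $b<c$ secures because both the twist exponent $-a-n-1$ and $a_0$ are nonnegative.

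The one delicate step is therefore the boundary degree — $i=m$ in~(i) and $i=n$ in~(ii): away from it the vanishing is formal, but there one must test the degree inequality on the \emph{minimal}-weight line-bundle summand of $\Sym^a\cV$ (resp.\ $\Sym^{-a-n-1}\cV$), and this is exactly where the sign of $a_0$ enters. I would flag that semipositivity is doing real work here: if $a_0<0$ the minimal weight $a a_0$ (resp.\ $(-a-n-1)a_0$) is unbounded below as $a$ grows, and the asserted vanishing genuinely fails, so the cleanest form of the statement carries the hypothesis $a_0\ge 0$.
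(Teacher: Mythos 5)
Your proposal is correct and is essentially the paper's own (one-line) proof written out in full: both reduce to line-bundle cohomology on $\PP^m$ via Lemma~\ref{lem:vanishing}, the only nontrivial degree in the range $0<i<n+m$ being $i=m$ in (i) and $i=n$ in (ii), where the bound is tested on the minimal weight $a\,a_0$ (resp.\ $(-a-n-1)a_0$) of the relevant symmetric power. Your observation that $a_0\ge 0$ is genuinely needed is also well taken, since the blanket positivity hypothesis is only imposed in the following subsection and the statement as literally given for an arbitrary scroll fails for, say, $\cV=\cO_{\PP^1}(-5)\oplus\cO_{\PP^1}$ with $a=1$, $b=-1$.
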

\begin{proof}
Both parts follow from \ref{lem:vanishing}(i).
\end{proof}

Recall the dual of the relative Euler exact sequence of a scroll $X$:
\begin{equation}\label{eq:Eulerseq}
0\To \Omega_{X|\PP^m}^1 (\Hyp) \To
\cB:=\bigoplus_{i=0}^n\cO_X(a_i\fib) \To \cO_X(\Hyp) \To 0,
\end{equation}
and so we have $\omega_{X|\PP^m} \cong \cO_X\tw{-(n+1)}{c}$. The long
exact sequence of exterior powers associated to \eqref{eq:Eulerseq} is
\begin{equation}\label{eq:Eulerexterior}
\begin{aligned}
0&\To \cO_X\tw{-n}{c} \To \wedge^n \cB((-n+1)\Hyp)
\stackrel{d_{n-1}}{\To}\wedge^{n-1}\cB((-n+2)\Hyp)
\stackrel{d_{n-2}}{\To}\\
&\cdots \stackrel{d_1}{\To} \cB \To \cO_X(\Hyp) \To 0.
\end{aligned}
\end{equation}
Now \eqref{eq:Eulerexterior} splits into
\begin{equation}\label{eq:Eulerexteriorshort}
0\To \Omega_{X|\PP^m}^i (i\Hyp)\To \wedge^i \cB \To
\Omega_{X|\PP^m}^{i-1}(i\Hyp) \To 0
\end{equation}
for each $i=1,\ldots, n$, and we have $\Im(d_i\otimes
\cO_S((i-1)\Hyp))\cong \Omega_{X|\PP^1}^i(i\Hyp)\subset \wedge^i \cB$.

We will often use the following exact sequences, obtained as pullbacks
of Koszul sequences from $\PP^m$:
\begin{equation}\label{eq:pbKoszul1}
0 \to \cO_X(-m\fib)\to \cO_X^{e_m}(-(m-1)\fib)
\to \cdots \to  \cO_X^{e_1} \to \cO_X(\fib)\to 0,
\end{equation}
with $e_j=\binom{m+1}{j}$, and
\begin{equation}\label{eq:pbKoszul2}
\begin{aligned}
0 &\to \cO_X\tw{-n}{c-m-1}\to \cO_X^{e_m}\tw{-n}{c-m} \to \cdots \to
\cO_X^{e_1}\tw{-n}{c-1}\to\\
&\to\bigoplus_{i=0}^n\cO_X\tw{-(n-1)}{c-a_i}\to\cdots\to
\bigoplus_{|I|=r}\cO_X\tw{-(n-r)}{c-a_I}\to\cdots\to\cB \to
\cO_X(\Hyp) \to 0.
\end{aligned}
\end{equation}

\subsection{Main definition}\label{subsect:maindef}

Our main definition is the following notion of regularity on a scroll
$(X,\cV)$. For $p,\,q\in\ZZ$ we set $\bdp=p\Hyp+q\fib$ and we write
$\cF(\bdp)=\cF\tw{p}{q}$.

\begin{definition}\label{def:pqreg}
A coherent sheaf $\cF$ on $X$ is said to be \emph{$(p,q)$-regular} if
\begin{enumerate}[(a)]
\item $h^{n+j}(\cF(\bdp)\tw{-n}{c-j-1})= 0$ for $0\leq j\leq m$ but
  $(n,j)\neq(0,0)$, and
\item $h^{i+j}(\cF(\bdp)\tw{-i}{i-j})=0$ for $0\leq j\leq m$ and
  $0\leq i< n$ but $(i,j)\not=(0,0)$.
\end{enumerate}

We will say \emph{regular} to mean $(0,0)$-regular.  We define the
\emph{regularity} of $\cF$, denoted $\Reg(\cF)$, to be the least
integer $p$ such that $\cF$ is $(p,0)$-regular. We set
$\Reg(\cF)=-\infty$ if there is no such integer.
\end{definition}

\begin{example}\label{ex:lowcases} Some special cases of this are familiar.
\begin{enumerate}[(i)]
\item If $m=0$, then $X=\PP^n$ and $\fib=0$. The conditions
  \ref{def:pqreg}(a) and \ref{def:pqreg}(b) reduce respectively to
  $h^n(\cF(\bdp)(-n\Hyp))=0$ and $h^i(\cF(\bdp)(-i\Hyp))=0$ for $1\leq
  i< n$, giving the usual notion of Castelnuovo-Mumford regularity on
  $\PP^n$.
\item If $n=0$ then $X=\PP(\cO_{\PP^m}(c))=\PP^m$ and
  $\Hyp=c\fib$. Indeed, $\omega_X\cong\cO_X\tw{-1}{c-1-m} =
  \cO_X(-c\fib+(c-1-m)\fib) = \cO_X((-1-m)\fib)$. Condition
  \ref{def:pqreg}(b) is vacuous, and \ref{def:pqreg}(a) reduces to
  $h^j(\cF(\bdp)((c-j-1)\fib))=0$ for $1\leq j\leq m$. So for $c=1$
  this again gives the usual notion of Castelnuovo-Mumford regularity
  on $\PP^m$, and for $c>1$ it gives a notion of Castelnuovo-Mumford
  regularity over the Veronese varieties $(\PP^m,\cO_{\PP^m}(c))$.
\item If $m=1$ and $n=2$ then $X$ is a rational normal scroll surface
  and the regularity of Definition~\ref{def:pqreg} agrees with the
  definition given in \cite{DM}. For $n>2$ see Remark~\ref{rk:rnsreg}
  below.
\item If $c=n+1$ then $X=\PP^n\times\PP^m$ and $\Hyp=[\cO(1,1)]$. The
  regularity of Definition~\ref{def:pqreg} agrees with the definition
  given in \cite{bm2}, since if $0\leq j\leq m$ and $0\leq i< n$ but
  $(i,j)\not=(0,0)$ then
\[
h^{n+m-j}(\cF(\bdp)\tw{-n}{c-1-m-j})=h^{n+m-j}(\PP^n\times\PP^m,
\cF(\bdp)(-n,-m-j))
\]
and
\[
h^{i+j}(\cF(\bdp)\tw{-i}{i-j})= h^{i+j}(\PP^n\times\PP^m,
\cF(\bdp)(-i,-j)).
\]
\end{enumerate}
\end{example}

\subsection{Positivity}\label{subsect:positivity}

Although the definitions and examples in
Subsection~\ref{subsect:maindef} make sense for arbitrary choices of
$a_i$, for applications it is usually necessary to have some
information about the positivity of $\Hyp$. Therefore we assume
henceforth that $X$ is a positive scroll, i.e.\ $a_0>0$, so that
$\Hyp$ is ample on $X$.

\begin{lemma}\label{lem:positivetwistvanishing}
If $\cF$ is a regular coherent sheaf on a positive scroll $X$, then
\[
  h^{n+m}(\cF\tw{a-n}{c-1-m+b})=0 \text{ for any }a,\,b\geq 0.
\]
\end{lemma}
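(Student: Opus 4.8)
The plan is to reduce the whole statement to a single cohomology vanishing supplied by regularity, and then to spread that vanishing to all $a,b\ge0$ by two elementary inductions, one in the fibre direction $\fib$ and one in the hyperplane direction $\Hyp$. The base case $a=b=0$ asks for $h^{n+m}(\cF\tw{-n}{c-1-m})=0$, which is precisely condition~(a) of Definition~\ref{def:pqreg} with $\bdp=0$ and $j=m$; this is available because $(n,j)=(n,m)\ne(0,0)$ for any scroll of positive dimension, which is the only case of interest.

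First I would move in the fibre direction. Fix $a$ and assume $h^{n+m}(\cF\tw{a-n}{c-1-m+b})=0$. Tensoring \eqref{eq:pbKoszul1} by $\cF\tw{a-n}{c-1-m+b}$ and retaining only its final surjection $\cO_X^{e_1}\to\cO_X(\fib)$ yields a short exact sequence
\[
0\to K\to \cF\tw{a-n}{c-1-m+b}^{\oplus e_1}\to \cF\tw{a-n}{c-1-m+b+1}\to0
\]
of coherent sheaves; surjectivity survives $\otimes\cF$ by right exactness, so no local freeness of $\cF$ is needed. Because $\dim X=n+m$ we have $H^{n+m+1}(K)=0$, and the long exact sequence exhibits $H^{n+m}(\cF\tw{a-n}{c-1-m+b+1})$ as a quotient of $H^{n+m}(\cF\tw{a-n}{c-1-m+b})^{\oplus e_1}=0$. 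Iterating upward from $b=0$ proves the statement for all $b\ge0$ at hyperplane level $a$, once the case $b=0$ is known there; in particular it settles the line $a=0$.

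Next I would raise $a$ by the same mechanism applied to the Euler sequence. Tensoring \eqref{eq:Eulerseq} by $\cF\tw{a-n}{c-1-m+b}$ and using the surjection $\cB\to\cO_X(\Hyp)$ produces a short exact sequence whose quotient is $\cF\tw{a+1-n}{c-1-m+b}$ and whose middle term is $\cB\otimes\cF\tw{a-n}{c-1-m+b}=\bigoplus_{i=0}^n\cF\tw{a-n}{c-1-m+(b+a_i)}$. Again $H^{n+m+1}$ of the kernel vanishes, so it is enough to kill $H^{n+m}$ of every summand. This is exactly where positivity enters: since each $a_i>0$ (indeed $a_i\ge0$ would suffice), every shifted level $b+a_i$ is still $\ge0$, so if the statement is already known for all $b'\ge0$ at level $a$, then all summands have vanishing top cohomology, and therefore so does $\cF\tw{a+1-n}{c-1-m+b}$, for every $b\ge0$.

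Assembling the pieces gives the induction: regularity provides $(a,b)=(0,0)$; the fibre step upgrades this to all $b\ge0$ on the line $a=0$; and the Euler step promotes ``all $b\ge0$ at level $a$'' to ``all $b\ge0$ at level $a+1$'', closing the induction on $a$. I do not expect a serious obstacle here: the only points needing care are formal, namely that $\otimes\cF$ keeps the tail maps of \eqref{eq:pbKoszul1} and \eqref{eq:Eulerseq} surjective, and that the connecting sheaf is annihilated in degree $n+m+1$ by the dimension bound $H^{n+m+1}(X,-)=0$. It is precisely to avoid Serre duality and any hypothesis that $\cF$ be locally free that I would thread the argument through the surjective tails of \eqref{eq:pbKoszul1} and \eqref{eq:Eulerseq} rather than through the full resolution \eqref{eq:pbKoszul2}.
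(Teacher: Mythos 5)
Your proof is correct and takes essentially the same route as the paper: the base case is Definition~\ref{def:pqreg}(a) with $j=m$, the induction on $b$ runs through the pulled-back Koszul sequence \eqref{eq:pbKoszul1}, and the induction on $a$ runs through the surjection $\cB\to\cO_X(\Hyp)$, which is exactly the tail of \eqref{eq:pbKoszul2} that the paper invokes. Your write-up is in fact cleaner than the paper's rather terse argument, since you isolate the observation that for top-degree cohomology only the final surjection of each complex matters, everything further back being absorbed by the Grothendieck vanishing $H^{n+m+1}(X,-)=0$.
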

\begin{proof}
From \eqref{eq:pbKoszul1} we get $h^{n+m}(\cF\tw{-n}{c-1-m+t})=0$ for
any $t\geq 0$. From \eqref{eq:pbKoszul2} tensored by
$\cF\tw{-(n-1)}{c-2}$ we get
$h^{n+1}\bigl(\cF\tw{-(n-1)}{c-2+t}\bigr)=0$ and again by
\eqref{eq:pbKoszul1} we obtain
$h^{n+1}\bigl(\cF\tw{-(n-1)}{c-2+t}\bigr)=0$ for $t\geq 0$. In the
same way $h^{n+1}(\cF\tw{a-n}{c-2+b})=0$ for any $a\geq 0$ and for any
$b\geq 0$.
\end{proof}

Notice that, if $\fdiv$ is a smooth divisor in $|\fib|$ (which exists
since $\cO_X(\fib)$ is globally generated) then
$\fdiv=\PP(\cO_{\PP^{m-1}}(a_0)\oplus\dots
\oplus\cO_{\PP^{m-1}}(a_n))$, so our notion of regularity is available
on $F$ too.

\begin{lemma}\label{lem:frestrictionregular}
If $\cF$ is a regular coherent sheaf on $X$, and $\fdiv$ a smooth
divisor in $|\fib|$, then $\cF_{|\fdiv}$ is a regular coherent sheaf on
$\fdiv$.
\end{lemma}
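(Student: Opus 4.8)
The plan is to relate cohomology on $\fdiv$ to cohomology on $X$ through the structure sequence of the divisor $\fdiv\in|\fib|$, and then to check that each regularity condition for $\cF_{|\fdiv}$ on $\fdiv$ is a consequence of the regularity of $\cF$ on $X$. First I would record the geometry already noted above: $\fdiv=\PP(\cO_{\PP^{m-1}}(a_0)\oplus\dots\oplus\cO_{\PP^{m-1}}(a_n))$ is again a scroll, carrying the \emph{same} data $n$ and $a_0,\dots,a_n$ — hence the same $c$ — but with the base dimension dropped from $m$ to $m-1$. Thus Definition~\ref{def:pqreg} applied on $\fdiv$ is formally identical to the one on $X$, except that the index $j$ now runs only over $0\le j\le m-1$. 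Moreover $\Hyp_{|\fdiv}$ and $\fib_{|\fdiv}$ are the corresponding generators of $\Pic\fdiv$, so $(\cF_{|\fdiv})\tw{a}{b}=(\cF\tw{a}{b})_{|\fdiv}$, and it suffices to prove the vanishing of the groups $H^\bullet(\fdiv,(\cF\tw{a}{b})_{|\fdiv})$ required by Definition~\ref{def:pqreg}(a),(b) on $\fdiv$.

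The main tool is the exact sequence $0\to\cO_X(-\fib)\to\cO_X\to\cO_\fdiv\to 0$. Tensoring by $\cF\tw{a}{b}$ gives
\[
0\to \cF\tw{a}{b-1}\to \cF\tw{a}{b}\to (\cF\tw{a}{b})_{|\fdiv}\to 0,
\]
where exactness on the left uses that $\mathcal{T}\!or_1^{\cO_X}(\cF,\cO_\fdiv)=0$; this holds automatically when $\cF$ is locally free, and for general coherent $\cF$ when $\fdiv$ is chosen generic in $|\fib|$, since then $\fdiv$ contains no associated subvariety of $\cF$. The long exact cohomology sequence then shows that $H^k(\fdiv,(\cF\tw{a}{b})_{|\fdiv})=0$ as soon as both $H^k(X,\cF\tw{a}{b})=0$ and $H^{k+1}(X,\cF\tw{a}{b-1})=0$.

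It remains to feed the twists of Definition~\ref{def:pqreg} on $\fdiv$ into this criterion. For condition~(a) on $\fdiv$ at index $j$ (with $0\le j\le m-1$) I take $a=-n$, $b=c-j-1$, $k=n+j$: the two required vanishings on $X$ are $H^{n+j}(\cF\tw{-n}{c-j-1})=0$, which is condition~\ref{def:pqreg}(a) for $\cF$ at $j$, and $H^{n+j+1}(\cF\tw{-n}{c-j-2})=0$, which is condition~\ref{def:pqreg}(a) for $\cF$ at $j+1$. Similarly, for condition~(b) on $\fdiv$ at $(i,j)$ (with $0\le i<n$, $0\le j\le m-1$) I take $a=-i$, $b=i-j$, $k=i+j$, and the two vanishings are condition~\ref{def:pqreg}(b) for $\cF$ at $(i,j)$ and at $(i,j+1)$. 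The point to check carefully — and the only real content beyond formal manipulation — is that the shifted index $j+1$ still lies in the admissible range $0\le j+1\le m$ of Definition~\ref{def:pqreg} on $X$; this is exactly why the weaker requirement $j\le m-1$ on $\fdiv$ suffices. One also verifies that the excluded cases cause no trouble: the shifted conditions always have $j+1\ge 1$, so they never meet the excluded pair $(0,0)$, while the unshifted conditions on $\fdiv$ exclude $(0,0)$ precisely where the corresponding condition on $X$ does. This completes the reduction.
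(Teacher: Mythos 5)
Your proof is correct and follows essentially the same route as the paper: restrict via the sequence $0\to\cF\tw{a}{b-1}\to\cF\tw{a}{b}\to(\cF\tw{a}{b})_{|\fdiv}\to 0$ and sandwich each required cohomology group on $\fdiv$ between the vanishing conditions \ref{def:pqreg}(a),(b) for $\cF$ at indices $j$ and $j+1$. Your explicit attention to the $\mathcal{T}\!or_1$ term (and to the $i=0$ case of condition (b)) is slightly more careful than the paper's write-up, but the argument is the same.
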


\begin{proof}
We may assume $n,\,m>0$. For $0\leq j\leq m-1$ we consider the exact
cohomology sequence
\[
H^{n+j}(\cF\tw{-n}{c-j-1}) \to H^{n+j}(\cF_{|\fdiv}\tw{-n}{c-j-1})
\to H^{n+j+1}(\cF\tw{-n}{c-j-2}).
\]
The first and the third terms vanish by condition
\ref{def:pqreg}(a), so the middle term vanishes.  Similarly,
for $1\leq i\leq n-1$ and $0\leq j\leq m-1$, we consider
\[
H^{i+j}(\cF\tw{-i}{i-j}) \to H^{i+j}(\cF_{|\fdiv}\tw{-i}{i-j}) \to
H^{i+j+1}(\cF\tw{-i}{i-j-1})
\]
and since the first and third terms vanish by condition
\ref{def:pqreg}(a), then the middle term vanishes. Hence
$\cF_{|\fdiv}$ is regular.
\end{proof}

\begin{lemma}\label{lem:0qregular}
If $\cF$ is a regular coherent sheaf on $X$,
then it is also $(0,q)$-regular for any $q\geq 0$.
\end{lemma}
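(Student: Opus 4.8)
The plan is to prove the slightly more flexible statement that $(0,q)$-regularity implies $(0,q+1)$-regularity for every $q\ge 0$; since ``regular'' means $(0,0)$-regular, iterating gives the lemma. Throughout I raise the $\fib$-degree by one while keeping the $\Hyp$-degree fixed. The basic device is the pulled-back Koszul complex \eqref{eq:pbKoszul1}: all of its terms are locally free, so tensoring it with $\cF\tw{a}{b}$ preserves exactness, and breaking the resulting long exact sequence into short pieces (equivalently, reading off its hypercohomology spectral sequence) yields the d\'evissage
\[
H^k\bigl(\cF\tw{a}{b+1}\bigr)=0 \quad\text{whenever}\quad H^{k+t}\bigl(\cF\tw{a}{b-t}\bigr)=0 \text{ for all } 0\le t\le m.
\]
Equivalently one may take a smooth $\fdiv\in|\fib|$, use \ref{lem:frestrictionregular} and the restriction sequence $0\to\cF\tw{a}{b}\to\cF\tw{a}{b+1}\to\cF_{|\fdiv}\tw{a}{b+1}\to0$, and induct on $m$ (the case $m=0$ being trivial, as then $\fib=0$).

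For condition \ref{def:pqreg}(a) I set $k=n+j$, $a=-n$ and $b+1=c-1-j+(q+1)$. The groups appearing in the d\'evissage are $H^{n+j'}\bigl(\cF\tw{-n}{c-1-j'+q}\bigr)$ with $j'=j,j+1,\dots,j+m$. For $j'\le m$ these are exactly the hypotheses \ref{def:pqreg}(a) for $(0,q)$-regularity (the excluded case $(n,j')=(0,0)$ never arises, since when $n=0$ the target already forces $j\ge 1$, hence $j'\ge 1$), while for $j'>m$ the cohomological degree $n+j'$ exceeds $\dim X=n+m$ and the term vanishes trivially. Thus condition \ref{def:pqreg}(a) propagates from $q$ to $q+1$ without difficulty.

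For condition \ref{def:pqreg}(b) I set $k=i+j$, $a=-i$ with $0\le i<n$, and $b+1=i-j+(q+1)$. The d\'evissage now needs the vanishing of $H^{i+j'}\bigl(\cF\tw{-i}{i-j'+q}\bigr)$ for $j'=j,\dots,j+m$. The range $j'\le m$ is again covered by \ref{def:pqreg}(b) for $(0,q)$-regularity. The difficulty is the range $m<j'\le n+m-i$: here the cohomological degree $i+j'$ is still at most $\dim X$, so there is no free vanishing, yet the $\Hyp$-degree $-i$ lies strictly between the extremal values $0$ and $-n$ occurring in \ref{def:pqreg}(a) and in the $i=0$ part of \ref{def:pqreg}(b), so neither regularity condition applies. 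The sharpest instance is $i=n-1$, $j'=m+1$ (i.e. $j=m$), a top-degree group $H^{n+m}\bigl(\cF\tw{-(n-1)}{\,\cdot\,}\bigr)$ that, for small $q$, is not reached by \ref{lem:positivetwistvanishing}. This is the main obstacle.

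To kill these intermediate terms I would interpolate in the $\Hyp$-direction using the relative Euler sequence \eqref{eq:Eulerseq} and its exterior powers \eqref{eq:Eulerexteriorshort}, which realise a unit change of $\Hyp$-degree through the bundles $\wedge^p\cB=\bigoplus_{|I|=p}\cO_X(a_I\fib)$; since these are pure $\fib$-twists, tensoring with $\cF$ lets one trade the offending $\Hyp$-degree $-i$ for the extremal degrees $0$ and $-n$ appearing in \ref{def:pqreg}(a)--(b), at the cost of $\fib$-twists of $\cF$ that the inductive hypothesis together with \ref{lem:vanishing}--\ref{lem:Hor} already control. Organising the induction on $i$ so that each such step only invokes cases already settled should reduce every intermediate term either to \ref{def:pqreg}(a) or to the $\Hyp$-degree $0$ case of \ref{def:pqreg}(b). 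I expect this $\Hyp$-interpolation---and in particular the bookkeeping that verifies all the auxiliary twists land inside the vanishing ranges---to be the delicate heart of the argument, whereas the $\fib$-direction d\'evissage above is routine.
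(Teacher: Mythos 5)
Your treatment of condition (a) is fine, but your proof of condition (b) has a genuine gap, and you have located it yourself: the Koszul d\'evissage on $X$ forces you to control groups $H^{i+j'}\bigl(\cF\tw{-i}{i-j'+q}\bigr)$ with $0<i<n$ and $m<j'\le n+m-i$ --- for instance $H^{n+m}\bigl(\cF\tw{-(n-1)}{n-m-2+q}\bigr)$ --- and these are simply not among the twists that $(0,q)$-regularity constrains, nor (for small $q$) are they reached by Lemma~\ref{lem:positivetwistvanishing}. The proposed repair, interpolating in the $\Hyp$-direction via \eqref{eq:Eulerseq} and \eqref{eq:Eulerexteriorshort}, is left entirely at the level of intention (``I would interpolate'', ``should reduce'', ``I expect''); no twists are checked, and it is not evident that trading $\Hyp$-degree $-i$ for the extremal degrees $0$ and $-n$ lands in controlled ranges rather than generating further uncontrolled groups. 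As written, the argument does not close.

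The claim that the d\'evissage is ``equivalent'' to restricting to a smooth $\fdiv\in|\fib|$ and inducting on $m$ is the real misstep, because the two devices are not interchangeable here, and the second is exactly what rescues the proof; it is the paper's route. In the sequence $H^{i+j}(\cF\tw{-i}{i-j-1+t})\to H^{i+j}(\cF\tw{-i}{i-j+t})\to H^{i+j}(\cF_{|\fdiv}\tw{-i}{i-j+t})$ the first term is handled by $(0,q)$-regularity of $\cF$ and recursion on $t$, while the third lives on the \emph{lower-dimensional} scroll $\fdiv$ over $\PP^{m-1}$, where Lemma~\ref{lem:frestrictionregular} plus the inductive hypothesis on $m$ apply and where the dangerous top-degree groups either fall inside the ranges of Definition~\ref{def:pqreg} for $\fdiv$ or vanish outright because $\dim\fdiv=n+m-1$. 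No cohomology of $\cF$ on $X$ outside the twists of Definition~\ref{def:pqreg} is ever needed. Replacing your d\'evissage for condition (b) by this restriction argument (your argument for condition (a) is correct either way) yields the paper's proof.
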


\begin{proof}
We proceed by induction on $m$. The case $m=0$ is proved in
\cite{Mu}. Now we assume the result for $m-1$ and we prove it for $m$.
For $0\leq j\leq m-1$ and for any $t\ge 0$ we have the exact sequence
\[
H^{n+j}(\cF\tw{-n}{c-j-2+t})\to H^{n+j}(\cF\tw{-n}{c-j-1+t})\to
H^{n+j}(\cF_{|\fdiv}\tw{-n}{c-j-1+t}).
\]
If $q=1$ then first term vanishes by conditions \ref{def:pqreg}(a) and
the third term vanishes by Lemma~\ref{lem:frestrictionregular} and the
inductive hypothesis, so the middle term vanishes. By recursion on $t$
we get $H^{n+j}(\cF\tw{-n}{c-j-1+q})=0$.  Moreover
$H^{n+j}(\cF\tw{-n}{c-j-1+q})=0$ also for $j=m$ by
Lemma~\ref{lem:positivetwistvanishing}. Thus $\cF(q\fib)$ satisfies
the conditions \ref{def:pqreg}(a).

The proof that $\cF(q\fib)$ satisfies the condition \ref{def:pqreg}(b)
is similar. For $1\leq i\leq n$ and $0\leq j\leq m$, we have
\[
H^{i+j}(\cF\tw{-i}{i-j-1+t}) \to H^{i+j}(\cF\tw{-i}{i-j+t}) \to
H^{i+j}(\cF_{|\fdiv}\tw{-i}{i-j+t},
\]
for any $t$. If $t=1$ then first term vanishes by condition
\ref{def:pqreg}(b). We want to show that the third term also
vanishes. Since $\cF_{|\fdiv}$ is regular by
Lemma~\ref{lem:frestrictionregular}, and $\cF_{|\fdiv}(\fib)$ is
regular by the inductive hypothesis, we have
$H^{i+j}(\cF_{|\fdiv}\tw{-i}{i-j+1})=0$ for $1\leq i\leq n-1$ and
$0\leq j\leq m-1$, and also for $(i,j)=(n,m)$ since
$\dim(\fdiv)<n+m$. Moreover the regularity of $\cF_{|\fdiv}(\fib)$
implies $H^{i+j}(\cF_{|\fdiv}\tw{-i}{i-j+1})=0$ for $j=n$ and $0\leq
j\leq m-1$.

Thus the middle term vanishes in all relevant cases and again by
recursion on $t$ we get $H^{i+j}(\cF\tw{-i}{i-j+q})=0$ for $1\leq
i\leq n$ and $0\leq j\leq m$. Hence $\cF(q\fib)$ also satisfies
condition \ref{def:pqreg}(b), and hence $\cF$ is $(0,q)$-regular.
\end{proof}

Notice that, if $\hafdiv$ is a smooth divisor in $|\Hyp-a_0\fib|$
(again, $\cO_X(\Hyp-a_0\fib)$ is globally generated) then $\hafdiv=\PP
(\cO_{\PP^m}(a_1)\oplus\dots \oplus\cO_{\PP^m}(a_n))$, so again the
regularity of Definition~\ref{def:pqreg} is available. We continue to
use $\Hyp$ and $\fib$ for the generators of $\Pic(\hafdiv)$: in $X$
they are obtained as the intersection products $(\Hyp-a_0\fib)\Hyp$
and $(\Hyp-a_0\fib)\fib$.

When $n=1$, we have $\hafdiv=\PP (\cO_{\PP^m}(a_1))\imic\PP^m$, and
$(\Hyp-a_0\fib)\fib=\fib$, the ample generator $\cO_{\PP^m}(1)$ of
$\Pic \hafdiv$. Then $(\Hyp-a_0\fib)\Hyp=c\fib-a_0\fib=a_1\fib$, so on
$S$ we have $\Hyp=a_1\fib$.

When $m=0$, we have $\hafdiv=\PP^{n-1}$, simply a hyperplane section
in $\PP^n$.

\begin{lemma}\label{lem:hfrestrictionregular}
Suppose that $n>0$ and $m>0$. If $\cF$ is a regular coherent sheaf on
$X$ and $\hafdiv$ is a smooth divisor in $|\Hyp-a_0\fib|$, then
$\cF_{|\hafdiv}$ is regular on $\hafdiv$.
\end{lemma}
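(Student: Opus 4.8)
The plan is to push everything through the restriction exact sequence of the divisor $\hafdiv\in|\Hyp-a_0\fib|$, exactly as in Lemma~\ref{lem:frestrictionregular}. Since $\cO_X(\hafdiv)\cong\cO_X\tw{1}{-a_0}$, twisting the structure sequence of $\hafdiv$ by $\cF\tw{a}{b}$ gives, for all $a,b$,
\[
0\To\cF\tw{a-1}{b+a_0}\To\cF\tw{a}{b}\To\cF_{|\hafdiv}\tw{a}{b}\To0,
\]
where I use that $\Hyp$ and $\fib$ restrict to the chosen generators of $\Pic\hafdiv$, so that $(\cF\tw{a}{b})_{|\hafdiv}=\cF_{|\hafdiv}\tw{a}{b}$. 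As $\hafdiv=\PP(\bigoplus_{i=1}^n\cO_{\PP^m}(a_i))$, its scroll parameters are $n-1$, $c-a_0$ and $m$, so regularity of $\cF_{|\hafdiv}$ is the pair of vanishing families obtained from Definition~\ref{def:pqreg} under $n\mapsto n-1$, $c\mapsto c-a_0$. I would check each family by locating the corresponding three-term stretch of the long exact cohomology sequence and showing that both neighbouring groups on $X$ vanish.

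For condition \ref{def:pqreg}(a) on $\hafdiv$ the stretch reads
\[
H^{(n-1)+j}(\cF\tw{-(n-1)}{c-a_0-j-1})\To H^{(n-1)+j}(\cF_{|\hafdiv}\tw{-(n-1)}{c-a_0-j-1})\To H^{n+j}(\cF\tw{-n}{c-j-1})
\]
for $0\le j\le m$. The right-hand group vanishes by condition \ref{def:pqreg}(a) for $\cF$, the exception $(n,j)=(0,0)$ being harmless because $n>0$. For the left-hand group I would invoke Lemma~\ref{lem:0qregular}: $\cF$ is $(0,q)$-regular with $q=c-a_0-n\ge0$, the inequality holding because $c-a_0=\sum_{i=1}^n a_i\ge n$ since each $a_i\ge a_0\ge1$; condition \ref{def:pqreg}(b) for $\cF\tw{0}{q}$ at index $n-1$ is precisely $H^{(n-1)+j}(\cF\tw{-(n-1)}{c-a_0-j-1})=0$, and its excluded case $(n-1,j)=(0,0)$ coincides with the one excluded in \ref{def:pqreg}(a) on $\hafdiv$, so nothing is lost (for $n=1$ this is the degenerate $\hafdiv\cong\PP^m$).

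For condition \ref{def:pqreg}(b) on $\hafdiv$, with $0\le i<n-1$, the stretch reads
\[
H^{i+j}(\cF\tw{-i}{i-j})\To H^{i+j}(\cF_{|\hafdiv}\tw{-i}{i-j})\To H^{(i+1)+j}(\cF\tw{-(i+1)}{i-j+a_0}).
\]
The left-hand group vanishes by condition \ref{def:pqreg}(b) for $\cF$ (here $i<n-1<n$), and the right-hand group vanishes because $\cF$ is $(0,a_0-1)$-regular (Lemma~\ref{lem:0qregular}, valid since $a_0\ge1$): condition \ref{def:pqreg}(b) for $\cF\tw{0}{a_0-1}$ at index $i+1<n$ is exactly $H^{(i+1)+j}(\cF\tw{-(i+1)}{i-j+a_0})=0$. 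Thus both neighbours vanish, the middle term does too, and $\cF_{|\hafdiv}$ satisfies \ref{def:pqreg}(a) and \ref{def:pqreg}(b) on $\hafdiv$.

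The homological bookkeeping is routine; the real content is the arithmetic of the twists. The main point to get right is that deleting the summand $\cO_{\PP^m}(a_0)$ shifts the $\fib$-degree of the surviving top-cohomology condition by $c-a_0-n$ and that of the intermediate conditions by $a_0-1$, and that both shifts are $\ge0$ precisely because of positivity; these are what let Lemma~\ref{lem:0qregular} bridge the gap between the condition on $X$ and the condition on $\hafdiv$. The one spot where a careless bound would open a genuine hole is the matching of the excluded index $(i,j)=(0,0)$ on the two sides, which is exactly what keeps the degenerate case $n=1$ consistent.
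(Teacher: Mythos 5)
Your argument is correct and is essentially the paper's own proof: the same restriction sequence for $\hafdiv\in|\Hyp-a_0\fib|$, the same two three-term stretches, with the right-hand neighbour in the (a)-check killed by condition \ref{def:pqreg}(a) for $\cF$ and the remaining neighbours killed by Lemma~\ref{lem:0qregular} via the inequalities $c-a_0\ge n$ and $a_0\ge 1$. Your extra care about the excluded index $(0,0)$ and the degenerate case $n=1$ is a welcome clarification but not a different method.
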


\begin{proof}
Consider the exact cohomology sequence, for $0\leq j\leq m$,
\[
H^{n-1+j}\bigl(\cF\tw{-(n-1)}{c-a_0-j-1}\bigr) \to
H^{n-1+j}\bigl(\cF_{|\hafdiv}\tw{-(n-1)}{c-a_0-j-1}\bigr) \to
H^{n+j}(\cF\tw{-n}{c-j-1}).
\]
The third term vanishes by condition \ref{def:pqreg}(a) and
Lemma~\ref{lem:positivetwistvanishing}, and the first term vanishes by
conditions \ref{def:pqreg}(b) and Lemma~\ref{lem:0qregular} (notice
that $c-a_0-j-1\geq n-1-j$ because $c-a_0=a_1+\dots +a_n\geq n$), so
the middle term vanishes, so conditions \ref{def:pqreg}(a) hold for
$\cF_{|\hafdiv}$.

For $1\leq i< n-1$ and $1\leq j\leq m$ but $(i,j)\neq(0,0)$, we
consider the exact sequence
\[
H^{i+j}(\cF\tw{-i}{i-j}) \to H^{i+j}(\cF_{|\hafdiv}\tw{-i}{i-j}) \to
H^{i+j+1}\bigl(\cF\tw{-(i+1)}{i-j+a_0}\bigr).
\]
The first term vanishes by \ref{def:pqreg}(b) and the third term
vanishes by Lemma~\ref{lem:0qregular} (notice that $i-j+a_0\geq
i+1-j$), so the middle term vanishes, so conditions \ref{def:pqreg}(b)
hold for $\cF_{|\hafdiv}$.
\end{proof}

\begin{proposition}\label{prop:spanning}
If $\cF$ is a regular coherent sheaf on $X$, then
\begin{enumerate}[(i)]
  \item $\cF\tw{p}{q}$ is regular for $p,\,q\geq 0$.
  \item $H^0(\cF(\fib))$ is spanned by (hence, equal to)
    $H^0(\cF)\otimes H^0(\cO(\fib))$, and $H^0(\cF(\Hyp))$ is spanned
    by $H^0(\cF(a_0\fib))\oplus\dots \oplus H^0(\cF(a_n\fib))$.
\end{enumerate}
\end{proposition}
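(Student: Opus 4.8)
The plan is to run the classical Castelnuovo--Mumford machine using the two ``hyperplane sections'' already in play---the fibre divisor $\fdiv\in|\fib|$ and the section $\hafdiv\in|\Hyp-a_0\fib|$---and to induct on dimension, feeding in Lemmas~\ref{lem:positivetwistvanishing}--\ref{lem:hfrestrictionregular} at each stage. For (i) the $\fib$-direction is already settled: Lemma~\ref{lem:0qregular} says precisely that $\cF(q\fib)$ is regular for $q\ge0$. So it remains to show that $\cF$ regular implies $\cF(\Hyp)$ regular; iterating this and then applying Lemma~\ref{lem:0qregular} yields regularity of $\cF\tw{p}{q}$ for all $p,q\ge0$. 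I would prove the $\Hyp$-step by induction on the relative dimension $n$, the base cases $n=0$ (where $\Hyp=c\fib$ and the claim is Lemma~\ref{lem:0qregular}) and $m=0$ (classical Mumford on $\PP^n$) being known. For the inductive step I tensor the structure sequence of $\hafdiv$ with $\cO_X(\Hyp)$,
\[
0\To \cF(a_0\fib)\To \cF(\Hyp)\To \cF_{|\hafdiv}(\Hyp)\To 0,
\]
and read off each defining vanishing of $\cF(\Hyp)$ from the long exact sequence: the $\cF(a_0\fib)$-terms die because $\cF(a_0\fib)$ is regular (Lemma~\ref{lem:0qregular}), while the $\cF_{|\hafdiv}(\Hyp)$-terms die because $\cF_{|\hafdiv}$ is regular on $\hafdiv$ (Lemma~\ref{lem:hfrestrictionregular}), hence $\cF_{|\hafdiv}(\Hyp)$ is regular there by the inductive hypothesis.

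The delicate point, and the step I expect to be the main obstacle, is the ``corner'' of condition~\ref{def:pqreg}(b) at $(i,j)=(n-1,m)$: here the relevant group on $\hafdiv$ is top-dimensional, $H^{\dim\hafdiv}$, and its twist lines up with no single regularity condition of $\cF_{|\hafdiv}$ nor of $\cF_{|\hafdiv}(\Hyp)$. It must be killed separately. The clean way is an auxiliary vanishing, $H^{n'+m'}\bigl(\cG\tw{1-n'}{\,n'-m'}\bigr)=0$ for every regular $\cG$ on a scroll with parameters $n',m',c'$, which I would prove by a parallel induction on $n'$: restricting once more inside $\hafdiv$ (to a divisor in $|\Hyp-a_1\fib|$) drops the dimension and annihilates the top cohomology, and the residual term vanishes by the regularity of $\cG$, with base $n'=0$ supplied by condition~\ref{def:pqreg}(a) for $\cG(\fib)$. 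Arranging the induction so that these top-degree corners are covered uniformly---rather than matched index-by-index against \ref{def:pqreg}(a)/(b)---is where the bookkeeping is genuinely needed.

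For the first spanning statement of (ii) I use $\fdiv\in|\fib|$ and argue as Mumford does for multiplication maps, by induction on $m$ (assuming, as we may for this statement, that $n,m\ge1$, the remaining cases being trivial or the classical reductions of Example~\ref{ex:lowcases}). From the two sequences $0\to\cF(-\fib)\to\cF\to\cF_{|\fdiv}\to0$ and $0\to\cF\to\cF(\fib)\to\cF_{|\fdiv}(\fib)\to0$ I get that $H^0(\cF)\to H^0(\cF_{|\fdiv})$ is onto, since $h^1(\cF\tw{0}{-1})=0$ by \ref{def:pqreg}(b) at $(0,1)$, while $H^0(\cO(\fib))\to H^0(\cO_\fdiv(\fib))$ is visibly onto. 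Given $t\in H^0(\cF(\fib))$, the inductive hypothesis (applicable since $\cF_{|\fdiv}$ is regular by Lemma~\ref{lem:frestrictionregular}) writes $t_{|\fdiv}$ as a sum of products; lifting the factors and subtracting, the difference vanishes on $\fdiv$ and is therefore the image of the section cutting out $\fdiv$, hence itself a product. This gives the $\fib$-spanning.

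For the second spanning statement I tensor the dual Euler sequence~\eqref{eq:Eulerseq} with $\cF$ (it remains exact, being locally split), obtaining
\[
0\To \cF\otimes\Omega^1_{X|\PP^m}(\Hyp)\To \bigoplus_{i=0}^n\cF(a_i\fib)\To \cF(\Hyp)\To 0,
\]
so surjectivity of $\bigoplus_i H^0(\cF(a_i\fib))\to H^0(\cF(\Hyp))$ follows once $H^1\bigl(\cF\otimes\Omega^1_{X|\PP^m}(\Hyp)\bigr)=0$. I would prove this by climbing the twisted exterior-power sequences~\eqref{eq:Eulerexteriorshort}, which yield inclusions
\[
H^1\bigl(\cF\otimes\Omega^1_{X|\PP^m}(\Hyp)\bigr)\hookrightarrow H^2\bigl(\cF\otimes\Omega^2_{X|\PP^m}(\Hyp)\bigr)\hookrightarrow\cdots\hookrightarrow H^n\bigl(\cF\otimes\Omega^n_{X|\PP^m}(\Hyp)\bigr),
\]
valid because the intervening summands $\bigoplus_{|I|=i}H^{i-1}(\cF\tw{-(i-1)}{a_I})$ vanish: each $a_I\ge|I|=i>i-1$, so this is \ref{def:pqreg}(b) for the regular sheaf $\cF\bigl((a_I-i+1)\fib\bigr)$ via Lemma~\ref{lem:0qregular}. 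Since $\Omega^n_{X|\PP^m}(\Hyp)=\cO_X\tw{-n}{c}$ and $H^n(\cF\tw{-n}{c})=0$ (condition~\ref{def:pqreg}(a) at $j=0$ for the regular sheaf $\cF(\fib)$), the top group is zero, hence so is $H^1(\cF\otimes\Omega^1_{X|\PP^m}(\Hyp))$, completing (ii).
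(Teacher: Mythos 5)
Your overall strategy coincides with the paper's: part (i) is reduced to the single $\Hyp$-step via Lemma~\ref{lem:0qregular} and then proved by induction on $n$ using the restriction sequence for $\hafdiv\in|\Hyp-a_0\fib|$, and part (ii) is sound --- your restriction-to-$\fdiv$ induction for the first spanning statement is an equivalent repackaging of the paper's use of \eqref{eq:pbKoszul1}, and your climb up the sequences \eqref{eq:Eulerexteriorshort} uses exactly the vanishings $H^{i-1}(\cF\tw{-(i-1)}{a_I})=0$ and $H^n(\cF\tw{-n}{c})=0$ that the paper extracts from \ref{def:pqreg}(b) and \ref{def:pqreg}(a) via \eqref{eq:pbKoszul2}. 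One small correction to part (i): for condition \ref{def:pqreg}(a) of $\cF(\Hyp)$ the third term $H^{n+j}(\cF_{|\hafdiv}\tw{1-n}{c-1-j})$ is \emph{not} a regularity condition of $\cF_{|\hafdiv}(\Hyp)$ (the $\Hyp$-degree is $1-n=-(n-1)$, not $1-(n-1)$); what kills it is condition (a) for $\cF_{|\hafdiv}$ itself at index $j+1$, twisted by $(a_0+1)\fib$ via Lemma~\ref{lem:0qregular} --- this re-indexing $j\mapsto j+1$ is how the paper avoids any induction for condition (a).

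The genuine gap is in your treatment of condition \ref{def:pqreg}(b) at the boundary of the induction. First, the problem is not confined to the single pair $(i,j)=(n-1,m)$: for \emph{every} $0\le j\le m$ the term $H^{(n-1)+j}\bigl(\cF_{|\hafdiv}\tw{2-n}{n-1-j}\bigr)$ has $\Hyp$-degree $1-n'$ and $\fib$-degree $n'-j$ (with $n'=n-1$), which lies outside the range $i'<n'$ of condition (b) on $\hafdiv$ and matches condition (a) on $\hafdiv$ only after the twist $q=n'+1-c'\le 0$, with equality only when $a_1=\dots=a_n=1$; so the inductive regularity of $\cF_{|\hafdiv}(\Hyp)$ covers only $i\le n-2$, and the whole row $i=n-1$ needs the extra argument (one can reduce $j<m$ to $j=m$ by a further descending induction on $j$ along $\fdiv$, but you must say so). Second, and more seriously, your proposed proof of the auxiliary vanishing $H^{n'+m'}\bigl(\cG\tw{1-n'}{n'-m'}\bigr)=0$ does not close: after restricting along a divisor in $|\Hyp-a'_0\fib|$ the residual first term is $H^{n'+m'}\bigl(\cG\tw{-n'}{n'-m'+a'_0}\bigr)$, and regularity of $\cG$ (condition (a) at $j=m'$ together with Lemma~\ref{lem:positivetwistvanishing}) only kills top cohomology in $\fib$-degrees $\ge c'-1-m'$; since $(n'-m'+a'_0)-(c'-1-m')=n'+1-\sum_{i\ge1}a'_i$, the needed degree falls below this threshold as soon as $\sum_{i\ge 1}a'_i>n'+1$ (already for $\PP(\cO_{\PP^1}(1)\oplus\cO_{\PP^1}(3))$). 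So the key top-degree vanishing at the corner is asserted rather than proved, and this is precisely the step where the argument --- like the paper's own rather terse induction step, which at $i=n-1$ also appeals to an inductive hypothesis that does not literally reach that index --- still needs a genuine idea.
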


\begin{proof}
To prove (i) it is enough to show that $\cF\tw{1}{0}=\cF(\Hyp)$ is
regular, since $\cF\tw{0}{1}=\cF(\fib)$ is regular by
Lemma~\ref{lem:0qregular}.

We have $H^{n+m}\bigl(\cF\tw{-(n-1)}{c-1-j}\bigr)=0$ by
Lemma~\ref{lem:positivetwistvanishing}. For $0\leq j\leq m-1$ we
consider the exact sequence
\[
H^{n+j}(\cF\tw{-n}{c+a_0-1-j}) \to
H^{n+j}\bigl(\cF\tw{-(n-1)}{c-1-j}\bigr) \to
H^{n-1+j+1}\bigl(\cF_{|\hafdiv}\tw{-(n-1)}{c-1-j}\bigr).
\]
The first term vanishes by hypothesis (and the fact that
$c+a_0-1-j>c-i-j)$ and the third term vanishes by
Lemma~\ref{lem:hfrestrictionregular} (and the fact that $c-1-j\geq
c-a_0-1-j-1$).  Thus the middle term vanishes, so \ref{def:pqreg}(a)
holds for $\cF(\Hyp)$.

We verify the condition \ref{def:pqreg}(b) for $\cF(\Hyp)$ by
induction on $n$. We need to show that if $0\leq i\leq n-1$ and $0\leq
j\leq m$ but $(i,j)\neq(0,0)$ then
$h^{i+j}\bigl(\cF\tw{-(i-1)}{i-j}\bigr)=0$.

If $n=1$, then $1\leq j\leq m$ then $\hafdiv=\PP^m$ and we have the
exact sequence
\[
H^j(\cF((a_0-j)\fib)) \to H^j(\cF(\Hyp-j\fib)) \to
H^j(\cF_{|\hafdiv}(\Hyp-j\fib))
\]
in which the first term vanishes by hypothesis (since $a_0-j>-j)$ and
the third term vanishes because
$H^j(\cF_{|\hafdiv}(\Hyp-j\fib))=H^j(\PP^m,\cF_{|\PP^m}(c-j))$, and
$\cF_{|\PP^m}(t)$ is regular on $\PP^m$ for any $t\geq 0$. Hence we
have the required vanishing of the middle term for $n=1$.

For the induction step, we may now assume the vanishing for
$\cF_{|\hafdiv}(\Hyp)$ on $\hafdiv$. Then, for $0\leq i\leq n-1$ and
$0\leq j\leq m$ but $(i,j)\not=(0,0)$, we have
\[
H^i(\cF\tw{-i}{i-j+a_0})\to H^i(\cF\tw{-(i-1)}{i-j}) \to
H^i(\cF_{|\hafdiv}\tw{-(i-1)}{i-j})
\]
in which the first term vanishes by \ref{def:pqreg}(b) and
Lemma~\ref{lem:0qregular}, since $i-j+a_0>i-j$, and the third term
vanishes by the inductive hypothesis and
Lemma~\ref{lem:hfrestrictionregular}. So the middle term vanishes, as
required.

For the proof of (ii), we start with \eqref{eq:pbKoszul1} and tensor
by $\cF$ to get
\[
0\to \cF^{e_{m}}(-(m-1)\fib) \to \cdots \to  \cF^{e_1} \to \cF(\fib)\to
0.
\]
Since $H^1(\cF(-\fib))=\dots =H^{m-1}(\cF(-(m-1)\fib))=0$ by
\ref{def:pqreg}(b) with $i=0$ and $j=1,\dots,m-1$, we obtain
\[
H^0(\cF)\otimes H^0(\cO_X(\fib))\to H^0(\cF(\fib)) \to 0,
\]
which gives the first part of~(ii) immediately. If instead we take
\eqref{eq:pbKoszul2} and tensor by $\cF$ we get:
\begin{equation*}
\begin{aligned}
0 &\to \cF\tw{-n}{c-m-1}\to\cF^{e_m}\tw{-n}{c-m}
\to\cdots\to\cF^{e_m}\tw{-n}{c-1} \to\bigoplus_{i=0}^n
\cF\tw{-(n-1)}{c-a_i}\to\\
&\cdots\to\bigoplus_{i=0}^n \cF(a_i\fib)
\to \cF(\Hyp) \to 0.
\end{aligned}
\end{equation*}
From \ref{def:pqreg}(a) we have
$H^{n+m}(\cF\tw{-n}{c-m-1})=\dots=H^n(\cF\tw{-n}{c-1})=0$.

From \ref{def:pqreg}(b), with $j=0$ and $i>0$, and using
Lemma~\ref{lem:0qregular}, we obtain $H^i(\cF\tw{-i}{a_I})=0$ for any
$I\subset\{0,\ldots,n\}$ with $|I|=i+1$, because then
$a_I>i=i-j$. From this we get immediately
\[
H^0(\cF(a_0\fib))\oplus\dots \oplus H^0(\cF(a_n\fib))\to
H^0(\cF(\Hyp)) \to 0
\]
as required.
\end{proof}

\begin{corollary}\label{cor:globalgen}
If $\cF$ is a regular coherent sheaf on $X$ then it is globally
generated.
\end{corollary}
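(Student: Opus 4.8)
The plan is to derive global generation straight from the spanning statements of Proposition~\ref{prop:spanning}, by showing that the evaluation map $\mathrm{ev}\colon \cO_X\otimes_k H^0(\cF)\to\cF$ is surjective. I would work with $\Pic(X)$-graded section data. Let $R=\bigoplus_{p,q\geq 0}H^0(\cO_X\tw{p}{q})$ be the bihomogeneous coordinate ring of $X$; it is generated by the sections $y_0,\dots,y_m$ of $\cO_X(\fib)$, of degree $\fib$, together with the sections $z_0,\dots,z_n$ of $\cO_X(\Hyp-a_i\fib)$, of degree $\Hyp-a_i\fib$. Let $M=\bigoplus_{(p,q)\in\ZZ^2}H^0(\cF\tw{p}{q})$, a graded $R$-module with $\widetilde M=\cF$ (the toric analogue of $\cF=\widetilde{\Gamma_*\cF}$ on $\PP^N$), and let $\alpha\colon R\otimes_k M_{(0,0)}\to M$ be multiplication, where $M_{(0,0)}=H^0(\cF)$. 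Since $\widetilde\alpha=\mathrm{ev}$ and $\widetilde{(\,\cdot\,)}$ is exact, global generation is equivalent to $\widetilde{\mathrm{coker}\,\alpha}=0$.

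The core step is to prove that $\alpha$ is surjective in every bidegree $(p,q)$ with $p,q\geq 0$, i.e.\ that $H^0(\cF\tw{p}{q})=R_{(p,q)}\cdot H^0(\cF)$. By Proposition~\ref{prop:spanning}(i) every twist $\cF\tw{p'}{q'}$ with $p',q'\geq 0$ is again regular, so both halves of Proposition~\ref{prop:spanning}(ii) are available for all such twists: multiplication by the $y_j$ gives $H^0(\cF\tw{p}{q})=\sum_j y_j\,H^0(\cF\tw{p}{q-1})$ for $q\geq 1$, and multiplication by the $z_i$ gives $H^0(\cF\tw{p}{0})=\sum_i z_i\,H^0(\cF\tw{p-1}{a_i})$ for $p\geq 1$. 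I would run a double induction, \emph{outer on $p$, inner on $q$}: assuming $\alpha$ surjective in all bidegrees $(p-1,q')$ with $q'\geq 0$, the $z_i$-relation first yields bidegree $(p,0)$ (its source bidegrees $(p-1,a_i)$ all satisfy $a_i\geq 0$, hence are covered), and then the $y_j$-relation reaches every $(p,q)$, $q\geq 1$, from $(p,q-1)$; the base $p=0$ is the inner induction starting from the identity in bidegree $(0,0)$. Every intermediate bidegree stays in the nonnegative quadrant, so Proposition~\ref{prop:spanning} genuinely applies at each stage.

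Finally I would pass from the module statement to the sheaf statement. Any homogeneous $x\in(\mathrm{coker}\,\alpha)_{(p,q)}$ can be moved into the nonnegative quadrant by multiplying by suitable powers $y_j^{t}z_i^{s}$ (choose $s$ to make the $\Hyp$-degree nonnegative, then $t$ to make the $\fib$-degree nonnegative), where $\mathrm{coker}\,\alpha$ vanishes by the previous step; hence $y_j^{t}z_i^{s}x=0$, and therefore $(y_jz_i)^{N}x=0\in B^{N}$ for $N\geq\max(s,t)$, where $B=\latt{y_0,\dots,y_m}\cap\latt{z_0,\dots,z_n}$ is the irrelevant ideal. Thus $\mathrm{coker}\,\alpha$ is $B$-torsion, so $\widetilde{\mathrm{coker}\,\alpha}=0$, which is exactly surjectivity of $\mathrm{ev}$; hence $\cF$ is globally generated.

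The main obstacle I anticipate is organising the induction in the second paragraph. Because multiplication by $z_i$ raises the $\Hyp$-degree while lowering the $\fib$-degree by $a_i$, a naive induction on $p+q$ fails; one must take the $\Hyp$-degree as the outer variable so that the $\fib$-degree is always replenished by the $y_j$'s, and one must check that no intermediate twist leaves the region where Proposition~\ref{prop:spanning} holds. The concluding descent to sheaves is standard toric commutative algebra, but it should be made explicit, since the module-level surjectivity does not by itself give surjectivity of $\mathrm{ev}$ without the observation that the cokernel is irrelevant.
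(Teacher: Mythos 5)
Your argument is essentially correct, but it takes a genuinely different route from the paper. Both proofs rest entirely on Proposition~\ref{prop:spanning}; the difference is in how one passes from the spanning statements to surjectivity of the evaluation map. The paper composes the two surjections of Proposition~\ref{prop:spanning}(ii) to get $H^0(\cF)^s\to H^0(\cF(l\Hyp))$ for $l\gg 0$, invokes Serre's theorem (ampleness of $\Hyp$) to know that $\cF(l\Hyp)$ is globally generated, and concludes by a commutative square. You instead stay at the level of the Cox ring: you prove the sharper statement that $H^0(\cF\tw{p}{q})=R_{(p,q)}\cdot H^0(\cF)$ for \emph{every} $(p,q)$ in the nonnegative quadrant (your ordering of the double induction, outer in the $\Hyp$-degree, is exactly what is needed, since the source bidegrees $(p-1,a_i)$ of the $z_i$-multiplication stay in the quadrant because $a_i\ge a_0>0$), and then deduce that $\mathrm{coker}\,\alpha$ is torsion for the irrelevant ideal. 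Your route buys more (explicit generation in all nonnegative bidegrees) but imports the toric Serre--Cox correspondence ($\widetilde{\Gamma_*\cF}=\cF$, exactness of sheafification, $\widetilde N=0$ iff $N$ is $B$-torsion), which the paper avoids at the cost of the softer input from Serre vanishing. Two small points to fix: as written, your ring $R=\bigoplus_{p,q\ge 0}H^0(\cO_X\tw{p}{q})$ does not contain the generators $z_i$, whose degree $\Hyp-a_i\fib=(1,-a_i)$ lies outside the first quadrant, so you should take $R$ to be the full $\ZZ^2$-graded Cox ring (the induction itself is unaffected, since all the module bidegrees you visit are nonnegative); and the identification of the maps $H^0(\cF(a_i\fib))\to H^0(\cF(\Hyp))$ in Proposition~\ref{prop:spanning}(ii) with multiplication by the $z_i$ should be stated explicitly, as it comes from the last arrow $\cB\to\cO_X(\Hyp)$ of \eqref{eq:pbKoszul2}.
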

\begin{proof}
Proposition~\ref{prop:spanning} gives surjections $H^0(\cF)^r\to
\bigoplus_{k=0}^n H^0(\cF(a_k\fib))$, for $r=h^0(\cO(\fib))$, and
$\bigoplus_{k=0}^n H^0(\cF(a_k\fib)) \to H^0(\cF(\Hyp))$. Thus we have
a surjection $H^0(\cF)^r\to H^0(\cF(\Hyp))$. Take $l\gg 0$ such that
$\cF(l\Hyp)$ is globally generated. For a suitable positive integer
$s$ the diagram
\[
\begin{CD}
  H^0(\cF)^s\otimes\cO_X @>>>   H^0(\cF(l\Hyp))\otimes\cO_X\\
  @VVV                         @VVV \\
  H^0(\cF)^r             @>>>   \cF(l\Hyp)
\end{CD}
\]
commutes and the left, top and right arrows are surjections. Therefore
the bottom arrow is also a surjection, so $\cF$ is generated by its
sections.
\end{proof}

\begin{corollary}\label{cor:reg0}
If $n>0$, then $\cO_X$, $\cO_X(\fib)$ and $\cO_X(\Hyp-\fib)$ are all
regular but not $(-1,0)$-regular, so
$\Reg(\cO_X)=\Reg(\cO_X(\fib))=\Reg(\cO_X(\Hyp-\fib))=0$.
\end{corollary}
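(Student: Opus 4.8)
The plan is to use that $\Reg(\cF)=0$ amounts exactly to the two displayed assertions: that $\cF$ is $(0,0)$-regular and that $\cF$ is not $(-1,0)$-regular. To see that these two facts pin down the regularity to be $0$ rather than merely $\le 0$, I would first record a monotonicity observation. Since $\cF$ is $(p,0)$-regular precisely when $\cF(p\Hyp)$ is regular, Proposition~\ref{prop:spanning}(i) (regularity is preserved under $\otimes\cO_X(\Hyp)$) shows that $(p,0)$-regularity implies $(p+1,0)$-regularity. Consequently, if $\cF$ were $(p,0)$-regular for some $p\le -1$, then applying this $-1-p\ge 0$ times would make $\cF(-\Hyp)$ regular, i.e. $\cF$ would be $(-1,0)$-regular; so once a sheaf is regular but not $(-1,0)$-regular, its regularity is exactly $0$. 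This reduces the whole statement to two finite cohomology checks for each of the three sheaves.

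For the positive part I would feed the relevant twists into Lemma~\ref{lem:vanishing}, writing $\cO_X(\fib)=\cO_X\tw{0}{1}$ and $\cO_X(\Hyp-\fib)=\cO_X\tw{1}{-1}$. Every group in Definition~\ref{def:pqreg}(a),(b) has a first index $a$ that either lies in the range $-n\le a<0$, so vanishes outright by Lemma~\ref{lem:vanishing}(ii), or equals $0$ or $1$, in which case Lemma~\ref{lem:vanishing}(i) rewrites it as cohomology of a direct sum of line bundles $\cO_{\PP^m}(d)$. These are killed by the standard facts that $H^t(\PP^m,\cO(d))=0$ for $0<t<m$, while $H^m(\PP^m,\cO(d))=0$ iff $d\ge -m$ and $H^0(\PP^m,\cO(d))=0$ iff $d<0$; the positivity hypothesis $a_0>0$ (so each $a_i\ge 1$ and $c\ge n+1$) guarantees the degrees fall in the vanishing ranges, the only delicate case being the top cohomology $t=m$. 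I would organise this as a short case split on the value of $i$ in condition~(b) and on $j=m$ versus $j<m$; regularity of $\cO_X(\fib)$ also drops out immediately from Lemma~\ref{lem:0qregular} applied to $\cO_X$.

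For the negative part I would exhibit, for each sheaf, one index at which the group is nonzero, via Lemma~\ref{lem:vanishing}(iii). For $\cO_X$, condition~(a) with $j=m$ gives $h^{n+m}(\cO_X\tw{-n-1}{c-m-1})\cong h^0(\PP^m,\cO_{\PP^m})=1$. For $\cO_X(\fib)$, condition~(a) with $j=0$ gives $h^{n}(\cO_X\tw{-n-1}{c})\cong h^m(\PP^m,\cO_{\PP^m}(-m-1))=1$. For $\cO_X(\Hyp-\fib)$ the first index in~(a) becomes $-n$, so~(a) holds by Lemma~\ref{lem:vanishing}(ii), and the obstruction instead sits in condition~(b) at $(i,j)=(0,m)$, where $h^{m}(\cO_X\tw{0}{-m-1})\cong h^m(\PP^m,\cO_{\PP^m}(-m-1))=1$. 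In each case the index satisfies $(i,j)\neq(0,0)$ because $n>0$ (and, for the third sheaf, $m>0$), so these are genuine failures of Definition~\ref{def:pqreg}.

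The computations are routine once the vanishing lemmas are in place, so the real work is bookkeeping: correctly propagating the combined twist $\tw{p}{q}$ through each condition, honouring the excluded index $(i,j)=(0,0)$, and monitoring the boundary case $j=m$. The one genuine subtlety, which I expect to be the main obstacle to a uniform statement, is the degenerate case $m=0$: there $X=\PP^n$, $\fib=0$, so $\cO_X(\fib)=\cO_X$ and $\cO_X(\Hyp-\fib)=\cO_{\PP^n}(1)$, and the obstruction for the third sheaf (which lives at $j=m$) disappears. Since $\cO_{\PP^n}(1)$ is $(-1)$-regular, the third equality must be read under the assumption $m>0$, whereas the statements for $\cO_X$ and $\cO_X(\fib)$ hold for all $m\ge 0$.
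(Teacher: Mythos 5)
Your proposal is correct and follows essentially the same route as the paper's proof: the same vanishing lemmas (Lemma~\ref{lem:vanishing} and the standard cohomology of line bundles on $\PP^m$, which is the content of Lemma~\ref{lem:Hor}) for the positive part, and the same three witnessing nonzero cohomology groups, computed via Lemma~\ref{lem:vanishing}(iii), for the failure of $(-1,0)$-regularity. Your two refinements are both correct and worth keeping: making explicit, via Proposition~\ref{prop:spanning}(i), that failure of $(-1,0)$-regularity rules out $(p,0)$-regularity for every $p\le -1$ (a step the paper leaves implicit in passing from ``not $(-1,0)$-regular'' to $\Reg=0$), and flagging that the assertion for $\cO_X(\Hyp-\fib)$ needs $m>0$, since when $m=0$ the only available obstruction sits at the excluded index $(i,j)=(0,0)$ and indeed $\cO_{\PP^n}(1)$ is $(-1)$-regular.
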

\begin{proof}
For $\cO_X$ and $\cO_X(\fib)$ the conditions \ref{def:pqreg}(a), and
\ref{def:pqreg}(b) for $i\neq 0$, follow immediately from
Lemma~\ref{lem:vanishing}(ii). For $i=0$ we require for
\ref{def:pqreg}(b) that $H^j(\cO_X(-j\fib))=H^j(\cO((-j+1)\fib)=0$ for
$j\le m$, and both of these are cases of Lemma~\ref{lem:Hor}.

So $\cO_X$ and $\cO_X(\fib)$ are regular.  On the other hand, they are
not $(-1,0)$-regular because $\cO_X(-\Hyp)$ and $\cO_X(-\Hyp+\fib)$
are not regular, since
\[
h^{n+m}(\cO_X(-\Hyp)\tw{-n}{c-1-m})=h^n(\cO_X(-\Hyp+\fib)\tw{-n}{c-1})=1.
\]

In the case of $\cO_X(\Hyp-\fib)$, for \ref{def:pqreg}(a) we require
that $h^{n+j}(\cO_X\tw{-n+1}{c-j-2})=0$ if $0\le j\le m$. This holds
by \ref{lem:vanishing}(ii) if $n\neq 1$ and by Lemma~\ref{lem:Hor} if
$n=1$, since $c-j-2\geq -m$.

For \ref{def:pqreg}(b) we require that
$h^{i+j}(\cO_X\tw{-i+1}{i-j-1})= 0$, for $0\leq j\leq m$ and $0\leq i<
n$ but $(i,j)\neq(0,0)$. This holds by Lemma~\ref{lem:vanishing}(ii)
when $i>1$, and by Lemma~\ref{lem:Hor} when $i=1$ since $i-j-1\geq
-m$. For $i=0$ it holds because, by Lemma~\ref{lem:vanishing}(i), we
have $H^j(\cO_X(\Hyp-(j+1)\fib)=H^j(\PP^m,
\cV\otimes\cO_{\PP^m}(-j-1))=0$.

So $\cO_X(\Hyp-\fib)$ is regular. On the other hand, it is not
$(-1,0)$-regular because $\cO_X(-\fib)$ is not regular, as since
$h^m(\cO_X(-\fib)\otimes\cO_X(-m\fib))=1$.
\end{proof}

\subsection{Comparison with multigraded regularity}\label{subsect:otherregularities}

The regularity defined in Subsection~\ref{subsect:maindef} is related
to the multigraded regularity of Maclagan and Smith~\cite{McSm} as it
applies in this special case.

Multigraded regularity, as defined in \cite[Definition 6.2]{McSm},
makes sense on any smooth projective variety $Y$ (and far more
generally). It depends, as ours does, on a chosen $\bdp\in \Pic Y$,
but also on a choice of a finite subset
$\bdcC=\{\bdc_1,\ldots,\bdc_\ell\}\subset \Pic Y$. If we further
assume that the divisors $\bdc_i$ are nef,
then~\cite[Corollary~6.6]{McSm} gives the following very simplified
definition for multigraded regularity.

\begin{definition}\label{def:MSreg}
  Suppose that $\cF$ is a sheaf on a smooth projective toric variety $Y$ and
  $\bdp\in\Pic Y$, and $\bdcC=\{\bdc_1,\ldots,\bdc_\ell\}$ is a finite
  set of nef divisor classes on $Y$. Then we say that $\cF$ is
  \emph{multigraded $\bdp$-regular} (with respect to $\bdcC$) if
  $H^i(\cF(\bdp-\sum\lambda_r\bdc_r))=0$ for all $i>0$ whenever
  $\lambda_i\in\NN$ and $\sum\lambda_r=i$. 
\end{definition}

Comparison of~\cite[Corollary~6.6]{McSm}
with~\cite[Definition~6.2]{McSm}, and the discussion there, gives the
following.

\begin{lemma}\label{def:MSlemma}
If $\cF$ is multigraded $\bdp$-regular under the conditions
of~\ref{def:MSreg}, then we also have
$H^i(\cF(\bdp+\bdm-\sum\lambda_r\bdc_r))=0$ for any $\bdm=\sum
\mu_r\bdc_r$ with $\mu_r\in\NN$ (i.e.\ $\bdm\in\NN\bdcC$ in the
notation of~\cite{McSm}).
\end{lemma}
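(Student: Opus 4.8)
The statement is the monotonicity of multigraded regularity: shifting from $\bdp$ to $\bdp+\bdm$ with $\bdm\in\NN\bdcC$ should preserve $\bdp$-regularity, since each $\bdc_r$ is nef and adding nef classes only makes higher cohomology vanish more readily. The plan is first to reduce to incrementing by a single generator, and then to propagate across that generator by means of a basepoint-free divisor. For the reduction, write $\bdm=\sum\mu_r\bdc_r$ and induct on $\sum_r\mu_r$; the base case $\bdm=0$ is the hypothesis, and for the inductive step it suffices to establish the single-increment claim: \emph{if $\cF$ is multigraded $\bdp'$-regular with respect to $\bdcC$, then $\cF$ is $(\bdp'+\bdc_s)$-regular, for each fixed $s$}. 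Applying this with $\bdp'=\bdp+\bdm'$, where $\bdm=\bdm'+\bdc_s$ and $\sum_r\mu_r'=\sum_r\mu_r-1$, then closes the induction.

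For the single-increment claim, recall that on a complete toric variety every nef class is basepoint-free, so we may choose an effective divisor $D_s$ in $|\bdc_s|$ and dispose of the exact sequence $0\To\cO_Y(-\bdc_s)\To\cO_Y\To\cO_{D_s}\To 0$. Fix $\lambda=(\lambda_1,\dots,\lambda_\ell)$ with $\lambda_r\in\NN$ and $\sum_r\lambda_r=i$ for some $i>0$. Twisting the sequence by the line bundle $\cO_Y(\bdp'+\bdc_s-\sum_r\lambda_r\bdc_r)$ and tensoring by $\cF$ produces the cohomology sequence
\[
H^i\bigl(\cF(\bdp'-{\textstyle\sum}\lambda_r\bdc_r)\bigr)\To
H^i\bigl(\cF(\bdp'+\bdc_s-{\textstyle\sum}\lambda_r\bdc_r)\bigr)\To
H^i\bigl(\cF(\bdp'+\bdc_s-{\textstyle\sum}\lambda_r\bdc_r)|_{D_s}\bigr).
\]
The left-hand term vanishes automatically: since $\sum_r\lambda_r=i$, it is exactly one of the instances of the defining vanishing for $\bdp'$-regularity. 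Thus the middle term, which is what we want to kill, is controlled entirely by the right-hand restriction term.

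The vanishing of that restriction term is the main obstacle, and it is not a one-line cohomology manipulation: the chosen $D_s$ need not be smooth, and if taken torus-invariant it is typically reducible and non-toric, so a naive induction on $\dim Y$ via restriction to $D_s$ does not go through, and moreover for a general coherent (rather than locally free) $\cF$ one must also account for the $\mathcal{T}or$ terms suppressed in the display above. This is precisely the point at which the local-cohomology and irrelevant-ideal formalism of Maclagan and Smith is needed: the simultaneous vanishing of all these restriction contributions, over all admissible $i$ and $\lambda$, is built into the equivalence between the general \cite[Definition~6.2]{McSm} and its nef specialisation \cite[Corollary~6.6]{McSm}. I would therefore obtain the required restriction vanishing, and with it the single-increment claim and hence the lemma, by invoking \cite[Corollary~6.6]{McSm} together with the surrounding discussion, the exact sequence above serving only to display the conceptual mechanism by which $\bdp'$-regularity feeds into $(\bdp'+\bdc_s)$-regularity.
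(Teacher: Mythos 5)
Your proposal is correct and ultimately rests on exactly the same ground as the paper: the paper's entire proof is the observation that \cite[Corollary~6.6]{McSm} upgrades the simplified nef hypothesis of Definition~\ref{def:MSreg} to regularity in the full sense of \cite[Definition~6.2]{McSm}, whose defining region of vanishing is already closed under translation by $\NN\bdcC$, so the conclusion is read off directly. Your induction on $\sum\mu_r$ and the restriction sequence for $D_s\in|\bdc_s|$ are therefore superfluous scaffolding --- you correctly diagnose that the one step you cannot complete elementarily (the vanishing of the restriction term, with its attendant singular-divisor and $\mathcal{T}\!or$ issues) is precisely what the citation supplies --- but nothing in the proposal is wrong.
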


To compare this with our definition, we shall take $Y$ to be a
semipositive scroll $X$, and $\bdcC=\{\Hyp,\fib\}$. Note that $\fib$
is nef by construction but $\Hyp$ is nef because of the
semipositivity.  The condition in Definition~\ref{def:MSreg} may be
rewritten as
\begin{equation}\label{eq:MS2}
H^{i+j}(\cF(\bdp)\tw{-i}{-j})=0\text{\ \ for all\ }i,\,j\in\NN\text{\ except\ }i=j=0.
\end{equation}

\begin{proposition}\label{prop:compareMS}
Let $(X,\cV)$ be a semipositive scroll and suppose that $\cF$ is a
multigraded $\bdp$-regular coherent sheaf on $X$, where
$\bdp=p\Hyp+q\fib$. Then $\cF$ is $(p,q)$-regular in the sense of
Definition~\ref{def:pqreg}.
\end{proposition}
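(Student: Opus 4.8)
The plan is to read off from multigraded $\bdp$-regularity a single family of vanishings and then match each cohomology group demanded by Definition~\ref{def:pqreg} against a member of that family. Combining \eqref{eq:MS2} with Lemma~\ref{def:MSlemma} (whose shift $\bdm$ ranges over $\NN\bdcC=\NN\Hyp+\NN\fib$), multigraded $\bdp$-regularity yields
\[
H^{\lambda_1+\lambda_2}\bigl(\cF(\bdp)\tw{\mu_1-\lambda_1}{\mu_2-\lambda_2}\bigr)=0
\]
for all integers $\lambda_1,\lambda_2,\mu_1,\mu_2\geq 0$ with $\lambda_1+\lambda_2\geq 1$. So the whole proof reduces to writing each group in \ref{def:pqreg}(a) and \ref{def:pqreg}(b) in this form with nonnegative parameters, i.e.\ to solving four inequalities for each target group.

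Condition \ref{def:pqreg}(b) is immediate: the group $H^{i+j}(\cF(\bdp)\tw{-i}{i-j})$ is realised by $\lambda_1=i$, $\lambda_2=j$ (which forces $\mu_1=0$) and $\mu_2=i\geq 0$, while $(i,j)\neq(0,0)$ supplies $\lambda_1+\lambda_2\geq 1$. This works on every semipositive scroll. For \ref{def:pqreg}(a), the group $H^{n+j}(\cF(\bdp)\tw{-n}{c-j-1})$ is realised by $\lambda_1=n$, $\lambda_2=j$, $\mu_1=0$ and $\mu_2=c-1$; the only constraint imposed is $\mu_2=c-1\geq 0$, i.e.\ $c\geq 1$. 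Since a positive scroll has $c\geq n+1$, this already settles the proposition whenever $\Hyp$ is ample, and more generally whenever $c\geq 1$.

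The main obstacle is therefore the remaining semipositive case $c=0$, where $a_0=\dots=a_n=0$, so $X=\PP^m\times\PP^n$ and $\Hyp=\cO(0,1)$ is nef but not ample. Here \ref{def:pqreg}(a) asks for the vanishing of $H^{n+j}(\cF(\bdp)\tw{-n}{-j-1})$, and one checks that no nonnegative $\lambda_1,\lambda_2,\mu_1,\mu_2$ realise the twist $\tw{-n}{-j-1}$ at cohomological degree $n+j$: the target is exactly one unit too negative in the $\fib$-direction for its degree, so the counting of the first paragraph cannot reach it. To attack this case I would tensor the relative Euler complex \eqref{eq:Eulerexterior} (with $c=0$) by $\cF(\bdp)\tw{0}{-j-1}$, so that the condition-(a) group sits at the left-hand (kernel) end, and chase the resulting long exact sequence. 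Every intervening term has the form $\cF(\bdp)^{\binom{n+1}{k}}\tw{-(k-1)}{-j-1}$ and its relevant cohomology $H^{k+j}$ is killed by \eqref{eq:MS2} (the inequality holds with equality, via $\lambda_1=k-1$, $\lambda_2=j+1$), so the condition-(a) group is squeezed down the complex.

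The genuinely delicate point, where the extra strength of Maclagan--Smith regularity is needed, is the final step of this chase against the right-hand term $\cO_X(\Hyp)$. The leftover group $H^{j}(\cF(\bdp)\tw{1}{-j-1})$ need not vanish on its own (a direct computation on $\PP^1\times\PP^1$ shows it does not), and what actually forces \ref{def:pqreg}(a) is the surjectivity on $H^{j}$ of the Euler map $\cB\to\cO_X(\Hyp)$, equivalently the vanishing of $H^{j+1}$ of the twisted $\Omega^1_{X|\PP^m}$-term in \eqref{eq:Eulerexteriorshort}. This surjectivity is precisely the content of multigraded $\bdp$-regularity beyond the bare vanishing \eqref{eq:MS2} --- the same content that yields global generation --- so I would invoke it (or deduce it directly by applying \eqref{eq:MS2} to the kernel of the relevant Koszul differential) to close the case $c=0$ and complete the proof.
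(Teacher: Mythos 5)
Your first two paragraphs reproduce the paper's proof exactly: the paper applies Lemma~\ref{def:MSlemma} with $\bdm=i\fib$ to obtain Definition~\ref{def:pqreg}(b) and with $\bdm=(c-1)\fib$ to obtain Definition~\ref{def:pqreg}(a), which is precisely your choice of parameters $(\lambda_1,\lambda_2,\mu_1,\mu_2)$. So whenever $c\ge 1$ --- in particular for every positive scroll, and for every semipositive scroll except $\cV=\cO_{\PP^m}^{n+1}$ --- your argument is complete and coincides with the published one.

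Your last two paragraphs concern a boundary case ($c=0$, where $\Hyp$ is nef but not ample) that the paper's proof silently skips: there $\bdm=(c-1)\fib=-\fib$ does not lie in $\NN\bdcC$, so Lemma~\ref{def:MSlemma} does not apply, and your count showing that no admissible $(\lambda_1,\lambda_2,\mu_1,\mu_2)$ reaches $H^{n+j}(\cF(\bdp)\tw{-n}{-j-1})$ is correct. You are right to flag this. However, what you offer for that case is not a proof. After the chase along \eqref{eq:Eulerexterior} the entire difficulty sits in the surjectivity of $H^j$ of the Euler map, equivalently in the vanishing of $H^{j+1}\bigl(\Omega^1_{X|\PP^m}(\Hyp)\otimes\cF(\bdp)\tw{0}{-j-1}\bigr)$, and neither of your two suggestions delivers it: the condition \eqref{eq:MS2} only controls twists of $\cF$ by line bundles, not by $\Omega^1_{X|\PP^m}$, so ``applying \eqref{eq:MS2} to the kernel of the Koszul differential'' is not available; and within this paper multigraded regularity \emph{is} the bare vanishing of Definition~\ref{def:MSreg} (that is how \cite[Corollary~6.6]{McSm} is being invoked), so there is no ``content beyond the vanishing'' to appeal to without reopening the comparison with \cite[Definition~6.2]{McSm}. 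As it stands the case $c=0$ is closed neither by your write-up nor by the paper's; the clean fixes are either to add the hypothesis $c\ge 1$ (harmless, given the standing positivity assumption of Subsection~\ref{subsect:positivity}) or to supply a genuine argument for $\cV=\cO_{\PP^m}^{n+1}$.
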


\begin{proof}
It is sufficient to use the condition~\eqref{eq:MS2}. Then applying
Lemma~\ref{def:MSlemma} with $\bdm=i\fib$ with $i<n$ and $j\le m$
gives Definition~\ref{def:pqreg}(b), and the same with $i=n$ and
$\bdm=(c-1)\fib$ gives Definition~\ref{def:pqreg}(a).
\end{proof}

On the other hand, if $X$ is the Hirzebruch surface
$\FF_2=\PP(\cO_{\PP^1}\oplus\cO_{\PP^1}(2))$ then $\cO_{\FF_2}$ is not
multigraded $(0,0)$-regular: see~\cite[Example~1.2]{McSm}
and~\cite[Example~6.5]{McSm}. However, $\cO_{\FF_2}$ is regular
according to Definition~\ref{def:pqreg}, by Corollary~\ref{cor:reg0}.

The main difference between the two definitions concerns the top
cohomology. Definition~\ref{def:MSreg} requires the vanishing
conditions
\[
H^{m+n}(\cF(\bdp)\tw{-r}{-m-n+r})=0\text{ for }0\le r\le m+n,
\]
which in this context reduce to the last one,
$H^{m+n}(\cF(\bdp)\tw{-m-n}{0})=0$. By contrast,
Definition~\ref{def:pqreg} requires only
$H^{m+n}(\cF(\bdp)\tw{-n}{c-m-1})=0$, which is a much weaker condition
(but makes sense only for scrolls).

In Section~\ref{sect:splitting} we will exploit this to prove
splitting criteria on scrolls, analogous to those of
Horrocks~\cite{Ho} on $\PP^n$.

\subsection{Rational normal scrolls}\label{subsect:rationalnormalscrolls}

We end this section with a discussion of the case $m=1$: these are the
rational normal scrolls $X=\PP (\cO_{\PP^{1}}(a_0)\oplus\dots
\oplus\cO_{\PP^{1}}(a_n))$.

\begin{remark}\label{rk:rnsreg}
If $X$ is a rational normal scroll and $\cF$ is a coherent sheaf on
$X$, then $\cF$ is $(p,q)$-regular if and only if
\begin{enumerate}[(a)]
\item $h^{n+1}(\cF\tw{-n}{c-2})=h^n(\cF(\bdp)\tw{-n}{c-1})=0$,
\item $h^{i+1}(\cF(\bdp)\tw{-i}{i-1})=0$ for $0\le i<n$,
\item $h^i(\cF(\bdp)\tw{-i}{i})= 0$ for $0<i<n$,
\end{enumerate}
where, as before, $\cF(\bdp)=\cF\tw{p}{q}$.
\end{remark}

In this case a smooth hyperplane section $\hdiv\in |\Hyp|$ of $X$ is
still a rational normal scroll of the same degree: i.e.\ 
$\hdiv=\PP(\cO_{\PP^1}(a'_0)\oplus\dots\oplus\cO_{\PP^1}(a'_{n-1}))$
with $a'_0+\dots +a'_{n-1}=c$. We continue to use $\Hyp$ and $\fib$
for the generators of $\Pic(\hdiv)$ (in $X$ they are obtained as the
intersection products $\Hyp^2$ and $\Hyp\fib$).

\begin{lemma}\label{lem:hrestrictionregular}
If $\cF$ is a regular coherent sheaf on the rational normal scroll $X$
and $\hdiv$ is a smooth divisor in $|\Hyp|$, then $\cF_{|\hdiv}$ is
$(0,1)$-regular on $\hdiv$.
\end{lemma}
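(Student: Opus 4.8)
The plan is to pull every vanishing required on $\hdiv$ back to a vanishing on $X$ by means of the hyperplane-section sequence, and then to read off each of those from the regularity of $\cF$. First I would make the target explicit. By the discussion preceding the lemma, $\hdiv$ is a rational normal scroll with $n-1$ summands and the same degree $c$, so $\dim\hdiv=n$; applying Remark~\ref{rk:rnsreg} to $\hdiv$ with $\bdp=\fib$ shows that $(0,1)$-regularity of $\cF_{|\hdiv}$ amounts to the vanishings
\[
h^n(\cF_{|\hdiv}\tw{-(n-1)}{c-1})=0,\qquad h^{n-1}(\cF_{|\hdiv}\tw{-(n-1)}{c})=0
\]
(the second only when $n\ge2$), together with $h^{i+1}(\cF_{|\hdiv}\tw{-i}{i})=0$ for $0\le i<n-1$ and $h^i(\cF_{|\hdiv}\tw{-i}{i+1})=0$ for $0<i<n-1$.

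The only machinery I need is the restriction sequence. Since $\hdiv\in|\Hyp|$, tensoring the ideal-sheaf sequence $0\to\cO_X(-\Hyp)\to\cO_X\to\cO_\hdiv\to0$ by $\cF\tw{a}{b}$ yields, for all $a,b$, the exact sequence
\[
0\to\cF\tw{a-1}{b}\to\cF\tw{a}{b}\to\cF_{|\hdiv}\tw{a}{b}\to0,
\]
and hence the three-term exact pieces $H^k(\cF\tw{a}{b})\to H^k(\cF_{|\hdiv}\tw{a}{b})\to H^{k+1}(\cF\tw{a-1}{b})$, in the style of the earlier lemmas. Thus to kill $H^k(\cF_{|\hdiv}\tw{a}{b})$ it suffices to kill the two flanking groups on $X$.

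I would then run through the four families above, in each case exhibiting the two flanking vanishings. The middle-degree flanks all come from the regularity of $\cF$: by Lemma~\ref{lem:0qregular}, $\cF$ is $(0,q)$-regular for every $q\ge0$, and for an appropriate $q$ a condition of Definition~\ref{def:pqreg} matches the required twist exactly. For example $h^n(\cF\tw{-(n-1)}{c-1})=0$ is condition~\ref{def:pqreg}(b) for the $(0,q)$-regular sheaf with $i=n-1$, $j=1$ and $q=c-n+1$; the vanishings $h^{i+1}(\cF\tw{-i}{i})=0$ and $h^i(\cF\tw{-i}{i+1})=0$ come from the $(0,1)$-regularity of $\cF$, while their upper flanks $h^{i+2}(\cF\tw{-(i+1)}{i})=0$ and $h^{i+1}(\cF\tw{-(i+1)}{i+1})=0$ come from the ordinary regularity of $\cF$. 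The one flank not of this type is the top-degree group $H^{n+1}(\cF\tw{-n}{c-1})$ arising in the first family; this is supplied instead by Lemma~\ref{lem:positivetwistvanishing} (with $a=0$, $b=1$).

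The single point that needs checking is that every shift $q$ manufactured in this way is non-negative, and this is exactly where positivity enters: $a_0>0$ forces $c=\sum a_i\ge n+1$, so $c-n+1\ge2$ and all the chosen values of $q$ are legitimate. Granting that, each flanking pair vanishes, so every middle term vanishes and $\cF_{|\hdiv}$ is $(0,1)$-regular. The degenerate case $n=1$, in which $\hdiv\cong\PP^1$ and every condition except $h^1(\cF_{|\hdiv}\tw{0}{c-1})=0$ is vacuous, is handled by the same sandwich (with $q=c$). I expect the only genuine obstacle to be the bookkeeping—matching each twist $\tw{a}{b}$ to the correct indices $(i,j)$ and shift $q$ in each family—rather than any real geometric difficulty.
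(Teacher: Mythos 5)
Your proposal is correct and takes essentially the same route as the paper: the same restriction sequence $0\to\cF\tw{a-1}{b}\to\cF\tw{a}{b}\to\cF_{|\hdiv}\tw{a}{b}\to0$, the same four families of vanishings extracted from Remark~\ref{rk:rnsreg}, with the middle-degree flanking terms killed via Lemma~\ref{lem:0qregular} and the top-degree flank $H^{n+1}(\cF\tw{-n}{c-1})$ via Lemma~\ref{lem:positivetwistvanishing}. If anything, your attribution of that last vanishing to Lemma~\ref{lem:positivetwistvanishing} is cleaner than the citation given in the paper's own proof.
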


\begin{proof}
In the exact sequence
\[
H^n\bigl(\cF\tw{-(n-1)}{c-1}\bigr) \to
H^n\bigl(\cF_{|\hdiv}\tw{-(n-1)}{c-1}\bigr) \to
H^{n+1}(\cF\tw{-n}{c-1})
\]
the third term vanishes by \ref{rk:rnsreg}(a) and
Lemma~\ref{lem:frestrictionregular} and the first term vanishes by
condition \ref{rk:rnsreg}(b) and Lemma~\ref{lem:0qregular}, since
$c-1\geq n-2$. So the middle term vanishes, so
$H^n(S,\cF_{|\hdiv}\tw{-(n-1)}{c-1}$. Similarly, in
\[
H^{n-1}(\cF\tw{-(n-1)}{c}) \to H^{n-1}(\cF_{|\hdiv}\tw{-(n-1)}{c}) \to
H^n(\cF\tw{-n}{c})
\]
the third and first terms vanish by \ref{rk:rnsreg}(a) and
Lemma~\ref{lem:0qregular}, respectively by \ref{rk:rnsreg}(c) and
Lemma~\ref{lem:0qregular}, since $c\geq n-1$. So the middle term
vanishes, so $H^{n-1}(\hdiv,\cF_{|\hdiv}\tw{-(n-1)}{c})=0$, and we
have verified \ref{rk:rnsreg}(a) for $\cF_{|\hdiv}(\fib)$.

For $0\leq i\leq n-2$, in
\[
H^{i+1}(\cF\tw{-i}{i}) \to H^{i+1}(\cF_{|\hdiv}\tw{-i}{i}) \to
H^{i+2}(\cF\tw{-(i+1)}{i})
\]
the first and the third terms vanish by \ref{rk:rnsreg}(b) and
Lemma~\ref{lem:0qregular}. So the middle term vanishes, giving
\ref{rk:rnsreg}(b) for $\cF_{|\hdiv}(\fib)$.

Finally, for $1\leq i\leq n-2$, in
\[
H^i(\cF\tw{-i}{i+1}) \to H^i(\cF_{|\hdiv}\tw{-i}{i+1}) \to
H^{i+1}\bigl(\cF\tw{-(i+1)}{i+1}\bigr)
\]
the first and third terms vanish by \ref{rk:rnsreg}(c) and
Lemma~\ref{lem:0qregular}. So the middle term vanishes, giving
\ref{rk:rnsreg}(c) for $\cF_{|\hdiv}(\fib)$.
\end{proof}

\section{Splitting criteria for vector bundles}\label{sect:splitting}

In this section we assume $m,\,n>0$. We apply the results of
Section~\ref{sect:scrolls} in order to prove splitting criteria for
vector bundles.

\begin{theorem}\label{thm:splittingO}
Suppose that $(X,\cV)$ is a positive scroll with $m,\,n>0$, and let
$\cE$ be a rank $r$ vector bundle on $X$. Then the following
conditions are equivalent:
\begin{enumerate}[(i)]
\item for any integer $t$ we have the vanishing
  \begin{enumerate}[(a)]
  \item $h^{n+j}(\cE\tw{t}{c-j-1})=0$ for $0\leq j< m$ and
  \item $h^{i+j}(\cE\tw{t}{i-j})=0$ for $0\leq j\leq m$ and $0\leq
    i<n$ but $(i,j)\neq(0,0)$;
  \end{enumerate}
\item there are $r$ integers $t_1,\dots,t_r$ such that $\cE\cong
  \bigoplus_{i=1}^r \cO_X(t_i\Hyp)$.
  \end{enumerate}
\end{theorem}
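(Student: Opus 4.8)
The plan is to prove (ii)$\Rightarrow$(i) by direct computation and (i)$\Rightarrow$(ii) by induction on $m$, restricting to a fibre divisor $\fdiv\in|\fib|$ and lifting a splitting back to $X$. For (ii)$\Rightarrow$(i) I would take $\cE=\bigoplus_k\cO_X(t_k\Hyp)$, so that each group occurring in (a) and (b) is a direct sum of groups $h^\ell(\cO_X\tw{a}{b})$. Using Lemma~\ref{lem:vanishing} to rewrite these as cohomology of $\Sym$-twists on $\PP^m$, together with the positivity $a_0>0$ (so that every weight of $\Sym^a\cV$ is non-negative when $a\ge0$), one checks each required vanishing case by case. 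The key point is that conditions (a) and (b) omit exactly the ranges $j=m$ and $i=n$, which is where Serre duality produces the non-vanishing top cohomology of the line bundles $\cO_X(t_k\Hyp)$; the omitted conditions are precisely the ones that would fail.

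For (i)$\Rightarrow$(ii) I induct on $m$. First I would restrict to a smooth $\fdiv\in|\fib|$ and twist the structure sequence $0\to\cO_X(-\fib)\to\cO_X\to\cO_\fdiv\to0$ by $\cE\tw{t}{b}$. A short diagram chase, sandwiching each group $H^{k}(\cE_{|\fdiv}\tw{t}{b})$ between two groups that vanish by (a) or (b) on $X$, shows that $\cE_{|\fdiv}$ satisfies the analogue of (i) on the scroll $\fdiv=\PP\bigl(\cO_{\PP^{m-1}}(a_0)\oplus\dots\oplus\cO_{\PP^{m-1}}(a_n)\bigr)$. When $m=1$ this is a fibre $\PP^n$ and the surviving conditions reduce to $h^i(\cE_{|\PP^n}(t))=0$ for $0<i<n$ and all $t$, the classical Horrocks criterion \cite{Ho}; for $m\ge2$ the inductive hypothesis applies. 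Either way $\cE_{|\fdiv}\imic\bigoplus_i\cO_\fdiv(t_i\Hyp)$.

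It remains to lift this splitting to $X$, which is the crux. Writing $\mathcal{G}=\bigoplus_i\cO_X(t_i\Hyp)$, so that $\mathcal{G}_{|\fdiv}\imic\cE_{|\fdiv}$, I would lift the isomorphism $\phi_0\colon\mathcal{G}_{|\fdiv}\to\cE_{|\fdiv}$ and its inverse to maps $\phi\colon\mathcal{G}\to\cE$ and $\psi\colon\cE\to\mathcal{G}$. Liftability of $\phi$ requires the vanishing of $H^1(\cE\tw{-t_i}{-1})$, which is condition (b) with $(i,j)=(0,1)$; liftability of $\psi$ requires $H^1(\cE^\vee\tw{t_i}{-1})=0$, and Serre duality together with $\omega_X\imic\cO_X\tw{-(n+1)}{c-1-m}$ turns this into $h^{n+m-1}(\cE\tw{s}{c-m})=0$, which is condition (a) with $j=m-1$. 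Both lie among the retained conditions, and this is exactly why (a) is imposed up to $j=m-1$ and (b) over the full range. Finally, since $\phi_0$ restricts to the identity, $\psi\phi$ restricts to the identity on $\fdiv$, so $\det(\psi\phi)\in H^0(\cO_X)$ is the constant $1$; hence $\det\phi$ is nowhere zero and $\phi$ is an isomorphism, giving $\cE\imic\mathcal{G}$.

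The main obstacle is this lifting step, and in particular the recognition that the fibre divisor $\fdiv\in|\fib|$ (rather than a divisor in $|\Hyp-a_0\fib|$) is the right thing to restrict to: only for this choice do both obstruction groups fall under the hypotheses, via condition (b) directly and condition (a) through Serre duality, precisely avoiding the dropped top-cohomology conditions. The verification that $\cE_{|\fdiv}$ again satisfies (i) and the base-case reduction to Horrocks on $\PP^n$ are then comparatively routine.
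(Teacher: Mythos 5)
Your proof is correct, but it takes a genuinely different route from the paper's. The paper proves (i)$\Rightarrow$(ii) by observing that the hypotheses in (i) are exactly the regularity conditions of Definition~\ref{def:pqreg} for every twist $\cE(t\Hyp)$ \emph{except} the single top-cohomology condition $h^{n+m}(\cE\tw{t-n}{c-m-1})=0$; choosing $t$ minimal with $\cE(t\Hyp)$ regular, that condition must fail for $t-1$, Serre duality converts the non-vanishing into $H^0(\cE^\vee(-t\Hyp))\neq0$, and the resulting cosection together with global generation of the regular sheaf $\cE(t\Hyp)$ (Corollary~\ref{cor:globalgen}) splits off a trivial summand; induction on the rank $r$ finishes. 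You instead induct on $m$: restrict to $\fdiv\in|\fib|$, reduce to classical Horrocks on a fibre $\PP^n$ in the base case, and lift the splitting using the two obstruction groups $H^1(\cE\tw{-t_i}{-1})$ and $H^1(\cE^\vee\tw{t_i}{-1})$, which are indeed killed by (b) with $(i,j)=(0,1)$ and, via Serre duality, by (a) with $j=m-1$ (I checked that $H^1(\cE^\vee\tw{t_i}{-1})^\vee\imic H^{n+m-1}(\cE\tw{-t_i-n-1}{c-m})$, and that the sandwiching giving (i) for $\cE_{|\fdiv}$ only uses the indices $(i,j+1)$ and $j+1$, which stay inside the stated ranges); the determinant argument then closes the loop. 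What the paper's route buys is brevity and a showcase for the regularity machinery of Section~1, splitting off one summand at a time; what yours buys is independence from Proposition~\ref{prop:spanning} and Corollary~\ref{cor:globalgen}, a direct reduction to Horrocks' original theorem, and a precise accounting of which hypothesis in (i) controls which lifting obstruction. Both arguments are valid.
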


\begin{proof}
First suppose that $\cE$ satisfies (i), and let $t$ be an integer such
that $\cE(t\Hyp)$ is regular but $\cE((t-1)\Hyp)$ is not. Comparing
the definition of regularity (Definition~\ref{def:pqreg} with $p=q=0$)
with (i) we see that $\cE((t-1)\Hyp)$ is not regular if and only if
$H^{n+m}(\cE\tw{t-n-1}{c-m-1})\neq 0$. In that case, by Serre duality
$H^0(\cE^\vee(-t\Hyp))\neq 0$. But this, together with the fact that
$\cE(t\Hyp)$ is globally generated by Corollary~\ref{cor:globalgen},
implies that $\cO_X$ is a direct summand of $\cE(t\Hyp)$. By induction
on $r$, it follows that $\cE$ satisfies~(ii).
  
Conversely, if $\cE$ satisfies (ii) then it satisfies (i) because
$\cO_X$ satisfies all the conditions in (i), by Lemma~\ref{lem:Hor}.
\end{proof}

In the case $X\cong\PP^n\times\PP^m$, i.e.\ $c=n+1$,
Theorem~\ref{thm:splittingO} reduces to
\cite[Theorem~1.3]{bm2}. Indeed,
\[
h^{n+j}(\cE\tw{t}{c-j-1})=h^{n+j}(\cE(t,t+n-j))=h^{n+j}(\cE(t-n,t-j))
\]
for $0\leq j< m$, and
\[
h^{i+j}(\cE\tw{t}{i-j})=h^{i+j}(\cE(t,t+i-j))=h^{i+j}(\cE(t-i,t-j))
\]
for $0\leq j\leq m$ and $0\leq i< n$ but $(i,j)\neq(0,0)$.

\begin{theorem}\label{thm:splittingOfh}
Let $\cE$ be a vector bundle on $X$. Then $\cE$ is a direct sum of
line bundles $\cO_X$, $\cO_X(\fib)$ and $\cO_X(\Hyp-\fib)$ with some
twist $t\Hyp$ if and only if the following conditions hold for any
integer $t$.
\begin{enumerate}[(a)]
\item $h^{n+j}(\cE\tw{t}{c-j-1})= 0$ for $1\leq j< m$,
\item $h^{i+j}(\cE\tw{t}{i-j})= 0$ for $0\leq j\leq m$ and $0\leq i<
  n$ but $(i,j)\neq(0,0), (0,m)$,
\item $h^{j+1}(\cE^\vee\tw{t}{-j})=0$ for $0\leq j< m$,
\item $h^{|I|}(\cE\tw{t}{a_I-1})=h^{|I|}(\cE^\vee\tw{t}{a_I-1})= 0$ if
  $1\leq |I|\leq n$.
\end{enumerate}
\end{theorem}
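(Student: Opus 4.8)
The plan is to prove the two implications separately. The easy direction ($\Leftarrow$) is a direct computation: for each of the generators $\cO_X(t\Hyp)$, $\cO_X(t\Hyp+\fib)$ and $\cO_X((t+1)\Hyp-\fib)$ one checks (a)--(d) using Lemmas~\ref{lem:vanishing} and~\ref{lem:Hor}, exactly as in the remark in the proof of Theorem~\ref{thm:splittingO} that $\cO_X$ satisfies all the conditions there. A useful structural point, which I would record at the outset, is that the three generators are permuted, up to an $\Hyp$-twist, by dualising: $\cO_X^\vee=\cO_X$, while $\cO_X(\fib)^\vee=\cO_X(\Hyp-\fib)(-\Hyp)$ and $\cO_X(\Hyp-\fib)^\vee=\cO_X(\fib)(-\Hyp)$. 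This makes the conditions (c), (d) on $\cE^\vee$ transparent in the easy direction, and it will organise the hard direction.

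For ($\Rightarrow$) I would argue by induction on the rank $r$ of $\cE$. Since (a)--(d) are vanishing statements they pass to direct summands, so it is enough to show that a nonzero $\cE$ satisfying (a)--(d) has one of the three generators, suitably $\Hyp$-twisted, as a direct summand. Reading (a), (b) for all $t$ against Definition~\ref{def:pqreg}, one sees that for every $t$ the twist $\cE(t\Hyp)$ satisfies all the requirements of regularity except possibly the three groups
\[
h^{n}(\cE\tw{t-n}{c-1}),\qquad h^{n+m}(\cE\tw{t-n}{c-m-1}),\qquad h^{m}(\cE\tw{t}{-m}).
\]
A nonzero bundle cannot be regular for every $t$ (a regular sheaf is globally generated by Corollary~\ref{cor:globalgen}, whereas $H^0(\cE(t\Hyp))=0$ for $t\ll0$), so at least one of these is nonzero for some $t$. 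A computation with Lemma~\ref{lem:vanishing} shows which generator each detects: the middle group equals $h^0(\cE^\vee(-(t+1)\Hyp))$ and detects an $\cO_X$-summand, exactly as in Theorem~\ref{thm:splittingO}; the first detects an $\cO_X(\fib)$-summand and the third an $\cO_X(\Hyp-\fib)$-summand.

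The next step I would take is to reduce the three summand types to one. A finite verification, rewriting each hypothesis by Serre duality ($\omega_X\cong\cO_X\tw{-(n+1)}{c-1-m}$) and the Koszul resolutions \eqref{eq:pbKoszul1}--\eqref{eq:pbKoszul2}, shows that (a)--(d) are self-dual, i.e.\ they also hold for $\cE^\vee$. Since $h^{m}(\cE\tw{t}{-m})=h^{n}(\cE^\vee\tw{-t-n-1}{c-1})$, the $\cO_X(\Hyp-\fib)$-case for $\cE$ is the $\cO_X(\fib)$-case for $\cE^\vee$, and the $\cO_X$-case is Theorem~\ref{thm:splittingO}; so it suffices to treat the $\cO_X(\fib)$-case. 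There the nonvanishing group is $h^{n}(\cE\tw{t-n}{c-1})=h^{m}(\cE^\vee\tw{-t-1}{-m})$, and transporting this along \eqref{eq:pbKoszul1} tensored by $\cE^\vee((-t-1)\Hyp)$ — whose intermediate terms have vanishing cohomology by condition (c) — yields a nonzero element of $H^0(\cE^\vee\tw{-t-1}{1})=\Hom(\cE,\cL)$ with $\cL=\cO_X(\fib)((-t-1)\Hyp)$, that is, a cosection $\tau\colon\cE\to\cL$. (Resolution \eqref{eq:pbKoszul2} together with condition (d) plays the analogous role along the $\Hyp$-direction, linking the $\cO_X$- and $\cO_X(\Hyp-\fib)$-detections.)

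The main obstacle is to upgrade the cosection $\tau$ to a genuine splitting, i.e.\ to guarantee $\tau\circ\sigma\neq0$ for some section $\sigma\colon\cL\to\cE$ rather than merely that both maps exist. The clean mechanism, as in Theorem~\ref{thm:splittingO}, is to know that $\cE\otimes\cL^\vee$ is globally generated: then the nonzero sheaf map $\tau\otimes\cL^\vee\colon\cE\otimes\cL^\vee\to\cO_X$ is nonzero on $H^0$, and a section realising $1\in H^0(\cO_X)$ splits off $\cL$. For the $\cO_X$-case this is automatic, because $\cE(t\Hyp)$ is regular and hence globally generated by Corollary~\ref{cor:globalgen}; but in the $\cO_X(\fib)$-case one needs $\cE\otimes\cL^\vee=\cE\tw{t+1}{-1}$ to be globally generated, which is stronger than regularity of $\cE((t+1)\Hyp)$. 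I therefore expect the delicate point to be the choice of the critical twist: rather than the regularity threshold of the family $\cE(s\Hyp)$, one should take the threshold of the shifted family $\cE(s\Hyp-\fib)$ (respectively $\cE(s\Hyp+\fib)$), so that $\cE\otimes\cL^\vee$ is regular by construction. Reconciling this threshold with the nonvanishing of the obstruction group $h^{n}(\cE\tw{t-n}{c-1})$, propagating the relevant vanishings with Lemmas~\ref{lem:0qregular} and~\ref{lem:positivetwistvanishing}, is where the bulk of the technical work will lie.
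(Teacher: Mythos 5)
Your skeleton matches the paper's: the easy direction by direct checking, and for the converse the location of a critical twist $t$ with $\cE(t\Hyp)$ regular and $\cE((t-1)\Hyp)$ not, the identification of exactly three possible obstruction groups, the reduction of the first to Theorem~\ref{thm:splittingO}, and the use of \eqref{eq:pbKoszul1} with (c) and of the Euler/Koszul resolutions with (d) to produce nonzero elements of $\Hom(\cE,\cO_X(\fib))$ and $\Hom(\cO_X(\fib),\cE)$. However, there is a genuine gap at exactly the point you flag: you never show that some composite $\cO_X(\fib)\to\cE\to\cO_X(\fib)$ is nonzero, and the mechanism you propose for doing so does not work. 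Global generation of $\cE\otimes\cL^\vee=\cE\tw{t+1}{-1}$ does not follow from the hypotheses: regularity of $\cE((t+1)\Hyp)$ only propagates to twists by $q\fib$ with $q\ge 0$ (Lemma~\ref{lem:0qregular}), and nothing in (a)--(d) controls the $-\fib$ direction. Re-choosing the threshold along the family $\cE(s\Hyp-\fib)$ does not repair this, because the failure-of-regularity groups for that shifted family are twists that are not among those whose vanishing is hypothesised in (a),(b) (which are indexed by $\Hyp$-twists only), so you lose the trichotomy of obstruction groups that drives the whole argument.

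The paper closes this gap by a different and self-contained device: a commutative diagram comparing the Serre duality cup product
\[
H^n\bigl(\cE\tw{-(n+1)}{c-1}\bigr)\otimes H^m(\cE^\vee(-m\fib))\To
H^{n+m}\bigl(\cO_X\tw{-(n+1)}{c-1-m}\bigr)
\]
with the composition pairing $\Hom(\cE,\cO_X(\fib))\otimes\Hom(\cO_X(\fib),\cE)\to\Hom(\cO_X(\fib),\cO_X(\fib))$, the two being linked by the connecting homomorphisms of \eqref{eq:Eulerexterior} and \eqref{eq:pbKoszul1}. The top pairing is nonzero by Serre duality, the maps on the one-dimensional right-hand column are isomorphisms, and the maps on the left are surjective, so the bottom pairing is nonzero; hence some $f\circ g$ is a nonzero endomorphism of $\cO_X(\fib)$, i.e.\ an isomorphism, and $\cO_X(\fib)$ splits off. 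No global generation of any negative $\fib$-twist is needed. A secondary concern: your reduction of case (iii) to case (ii) rests on the assertion that (a)--(d) are self-dual, which you do not verify and which is not obvious for general $a_i$ (the paper only notes that (c) follows from (a) in the special case $c=n+1$); the paper sidesteps this by treating case (iii) directly with the dual resolutions, which costs little. You would need either to supply the cup-product compatibility argument or an actual proof of the self-duality plus a correct splitting mechanism before this proposal becomes a proof.
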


\begin{proof}
As in Theorem \ref{thm:splittingO} it is easy to check that $\cO_X$,
$\cO_X(\fib)$ and $\cO_X(\Hyp-\fib)$ satisfy all the required
vanishing.

Assume that (a)--(d) hold, and consider the integer $t$ such that
$\cE(t\Hyp)$ is regular but $\cE((t-1)\Hyp)$ is not. Up to a twist we
may assume $t=0$.

Comparing (a) and (b) with the definition of regularity, we see that
$\cE(-\Hyp)$ is not regular if and only if one of the following
conditions is satisfied:
\begin{enumerate}[(i)]
\item $h^{n+m}\bigl(\cE\tw{-(n+1)}{c-m-1}\bigr)\neq 0$,
\item $h^n\bigl(\cE\tw{-(n+1)}{c-1}\bigr)\neq 0$,
\item $h^m(\cE\tw{-1}{-m})\neq 0$.
\end{enumerate}

We consider each of these possibilities in turn.

If (i) holds, then $\cO_X$ is a direct summand, as in the proof of
Theorem~\ref{thm:splittingO}.

If (ii) holds, then we consider \eqref{eq:Eulerexterior} tensored by
$\cE(-\Hyp-\fib)$, which gives
\begin{equation*}
0\to \cE\tw{-(n+1)}{c-1}\to \bigoplus_{i=0}^n
\cE\tw{-n}{c-a_i-1}\to
\cdots\to\bigoplus_{i=0}^n
\cE\tw{-1}{a_i-1} \to \cE(-\fib) \to 0.
\end{equation*}
Since by (d)
\[
H^n\bigl(\bigoplus_{i=0}^n
\cE\tw{-n}{c-a_i-1}\bigr)=\dots=H^1\bigl(\bigoplus_{i=0}^n
\cE\tw{-1}{a_i-1}\bigr) =0,
\]
we have a surjection $H^0(\cE(-\fib))\to
H^n\bigl(\cE\tw{-(n+1)}{c-1}\bigr)$. So $H^0(\cE(-\fib))\neq 0$, and
there exists a nonzero map $f\colon \cE \to \cO_X(\fib)$.

On the other hand $H^n\bigl(\cE\tw{-(n+1)}{c-1}\bigr)\imic
H^m(\cE^\vee(-m\fib))$. Then the exact sequence \eqref{eq:pbKoszul1}
tensored by $\cE^\vee$ reads
\[
0 \to \cE^\vee(-m\fib)\to \cE^\vee(-(m-1)\fib)^{e_m}
\to \cdots \to  (\cE^\vee)^{e_1} \to \cE^\vee(\fib)\to 0.
\]
But by (c)
\[
H^1(\cE^\vee)=\dots =H^m(\cE^\vee(-(m-1)\fib))=0,
\]
so we have a surjective map $H^0(\cE^\vee(\fib))\to
H^n\bigl(\cE\tw{-(n+1)}{c-1}\bigl)$. Therefore
$H^0(\cE^\vee(\fib))\neq 0$ and there exists a nonzero map $g\colon
\cO_X(\fib)\to \cE$.

Let us consider the following commutative diagram:
\[
\begin{CD}
H^n\bigl(\cE\tw{-(n+1)}{c-1}\bigr)\otimes H^m(\cE^\vee(-m\fib))
@>\sigma>> H^{n+m}\bigl(\cO_X\tw{-(n+1)}{c-1-m}\bigr)\\
@VVV @VVV\\
H^0(\cE(-\fib))\otimes H^1(\cE^{\vee}(-\fib))
@>>> H^1(\cO_X(-\fib)\otimes\cO_X(-\fib))\\
@VVV @VVV\\
H^0(\cE(-\fib))\otimes H^0(\cE^\vee(\fib))
@>\tau>> H^0(\cO_X(-\fib)\otimes\cO_X(\fib))\\
@| @|\\
\Hom(\cE,\cO_X(\fib))\otimes\Hom(\cO_X(\fib),\cE)
@. \Hom(\cO_X(\fib),\cO_X(\fib)).
\end{CD}
\]
The spaces in the right column are all $1$-dimensional. The map
$\sigma$ comes from Serre duality and it is not zero, the right
vertical map are isomorphisms and the left vertical maps are
surjective, so the map $\tau$ is also not zero. This means that the
the map $f\circ g\colon \cO_X(\fib) \to \cO_X(\fib)$ is non-zero and
hence it is an isomorphism. This isomorphism shows that $\cO_X(\fib)$
is a direct summand of $\cE$.

If (iii) holds then the exact sequence \eqref{eq:pbKoszul1} tensored
by $\cE(-\Hyp)$ reads
\[
0 \to \cE(-\Hyp-m\fib)\to \cE(-\Hyp-(m-1)\fib)^{e_m}\to \cdots \to
\cE(-\Hyp)^{e_1} \to \cE(-\Hyp+\fib)\to 0.
\]
Also, by (b),
\[
h^1(\cE(-\Hyp))=\dots =h^m(\cE(-\Hyp-(m-1)\fib))=0,
\]
so $h^0(\cE(-\Hyp+\fib))\neq 0$. By Serre duality,
$h^m(\cE(-\Hyp-m\fib))=h^n(\cE^\vee\tw{-n}{c-1})$. From the exact
sequence
\[ 
0\to \cE^\vee\tw{-n}{c-1}\to \bigoplus_{i=0}^n
\cE^\vee\tw{-(n-1)}{c-a_i-1}\to\cdots \to\bigoplus_{i=0}^n
\cE^\vee((a_i-1)\fib) \to \cE^\vee(\Hyp-\fib) \to 0
\]
we get also $h^0(\cE^\vee(\Hyp-\fib))\neq 0$, since by (d),
\begin{equation*}
  \begin{aligned}
h^n\bigl(\bigoplus_{i=0}^n
\cE^\vee\tw{-(n-1)}{c-a_i-1}\bigr)
&=h^1\bigl(\bigoplus_{i=0}^n\cE\tw{-2}{a_i-m}\bigr)=
\cdots=h^1\bigl(\bigoplus_{i=0}^n\cE^\vee((a_i-1)\fib)\bigr)\\
&=h^n\bigl(\bigoplus_{i=0}^n
\cE\tw{-(n+1)}{c-a_i-m}\bigr)=0.
  \end{aligned}
\end{equation*}
This shows, by the same argument as before, that $\cO_X(\Hyp-\fib)$ is
a direct summand of $\cE$.
\end{proof}

If $c=n+1$ then $X=\PP^n\times\PP^m$ and
Theorem~\ref{thm:splittingOfh} is an improvement of
\cite[Theorem~1.4]{bm2}. To see this, first note that in this case (c)
is implied by (a), because for $0\leq j\leq m-1$ we have
$h^{j+1}(\cE^\vee\tw{t}{j})=h^{m+n-j-1}(\cE\tw{-t-n}{-t+j-m})$. Moreover,
because $a_0=\dots =a_n=1$, if $|I|=i$ we get
\[
h^i(\cE\tw{t}{a_I-1})=h^i(\cE\tw{t}{t-1+i})=h^i(\cE\tw{t+1-i}{t})
\]
and
\[
h^i(\cE^\vee\tw{t}{a_I-1})=h^i(\cE^\vee\tw{t+1-i}{t})
=h^{m+n-i}(\cE\tw{-t-2-n+i}{-t-m-1}).
\]
\smallskip

In what follows we will use the following truncations of
\eqref{eq:Eulerexterior} for $i=1,\dots ,n$:

\begin{equation}\label{eq:trunc1}
\begin{aligned}
0&\to \cO_X\tw{-(n+1)}{c}\to
\bigoplus_{|I|=n}\cO_X\tw{-n}{a_I}\to\cdots\\
&\to\bigoplus_{|I|=i+1}
\cO_X\tw{-(i+1)}{a_I}\to \Omega_{X|\PP^m}^i\to 0
\end{aligned}
\end{equation}
and
\begin{equation}\label{eq:trunc2}
0\to \Omega_{X|\PP^m}^i \to \bigoplus_{|I|=i}
\cO_X\tw{-i}{a_I}\to\cdots \to\bigoplus_{|I|=1} \cO_X\tw{-1}{a_I}\to
\cO_X \to 0.
\end{equation}
Dualising \eqref{eq:trunc2} and tensoring with $\cO_X\tw{-(n+1)}{c}$
gives
\begin{equation*}
\begin{aligned}
0&\to \cO_X\tw{-(n+1)}{c}\to \bigoplus_{|I|=1}
\cO_X\tw{-n}{c-a_I}\to\cdots\\
&\to\bigoplus_{|I|=i}
\cO_X\tw{-(n+1-i)}{c-a_I}\to
(\Omega_{X|\PP^m}^i)^\vee\tw{-(n+1)}{c}\to 0,
\end{aligned}
\end{equation*}
so, using the obvious fact that $c-a_I=a_{I'}$ where
$I'=\{0,\ldots,n\}\smallsetminus I$, we get
\[
(\Omega_{X|\PP^m}^{i})^\vee\tw{-(n+1)}{c}\cong\Omega_{X|\PP^m}^{n-i}.
\]

\begin{theorem}\label{thm:indecomposibleregular}
Let $\cE$ be an indecomposible vector bundle on $X$ such that
$\Reg(\cE)=0$.  Suppose that the following vanishing occurs.
\begin{enumerate}[(a)]
\item $h^{n+j}\bigl(\cE\tw{-(n+1)}{c-j-1}\bigr)= 0$ for $1\leq j< m$,
\item $h^{i+j}\bigl(\cE\tw{-(i+1)}{i-j}\bigr)=
  h^{i+j}\bigl(\cE\tw{-(i+1)}{i-j+1}\bigr)=0$ for $1\leq j\leq m$ and
  $0\leq i< n$,
\item $h^{j+1}(\cE^\vee(-j\fib))=h^{j+1}(\cE(-\Hyp-j\fib))= 0$ for
  $0\leq j<m$,
\item $h^{|I|}\bigl(\cE\tw{-|I|}{a_I-1}\bigr)=
  h^{|I|}\bigl(\cE^\vee\tw{-|I|+1}{a_I-1}\bigr)=0$ if $1\leq |I|\leq
  n$,
\item
  $\begin{aligned}[t]
  h^k\bigl(\bigoplus\limits_{|I|=1-k+i}\cE\tw{-k}{k+1-a_I}\bigr)&=0
  \text{ for }0< k\le i <n \text{
    and}\\ h^k\bigl(\bigoplus\limits_{|I|=k+1}
  \cE^\vee\tw{-(k-1)}{a_I-i-1}\bigr)&=0\text{ for }0< k\le n-i <n.
\end{aligned}$ 
\end{enumerate}
Then $\cE\cong \cO_X$, or $\cE\cong\cO_X(\fib)$, or
$\cE\cong\cO_X(\Hyp-\fib)$, or $\cE\cong
\Omega_{X|\PP^m}^i\tw{i+1}{-(i+1)}$ with $1<i<n$.
\end{theorem}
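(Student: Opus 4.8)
The plan is to run a Horrocks-type summand-splitting argument and finish by indecomposability. Since $\Reg(\cE)=0$, the bundle $\cE$ is regular, hence globally generated by Corollary~\ref{cor:globalgen}, whereas $\cE(-\Hyp)$ is \emph{not} regular. I would start by comparing Definition~\ref{def:pqreg}, applied to $\cE(-\Hyp)$, with the assumed vanishing in (a) and (b): assumption (a) disposes of the intermediate instances of \ref{def:pqreg}(a), while the first vanishing in (b) disposes of every instance of \ref{def:pqreg}(b) with $j\ge 1$. The conclusion of this first step is that the non-regularity of $\cE(-\Hyp)$ can only be caused by the top non-vanishing $h^{n+m}\bigl(\cE\tw{-(n+1)}{c-m-1}\bigr)\neq0$, by $h^{n}\bigl(\cE\tw{-(n+1)}{c-1}\bigr)\neq0$, or by $h^{i}\bigl(\cE\tw{-(i+1)}{i}\bigr)\neq0$ for some $i$ with $1\le i<n$. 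Each of these non-vanishings will split a specific summand off $\cE$.

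The extreme non-vanishings are handled exactly as in Theorems~\ref{thm:splittingO} and~\ref{thm:splittingOfh}. If $h^{n+m}\bigl(\cE\tw{-(n+1)}{c-m-1}\bigr)\neq0$ then Serre duality gives $H^0(\cE^\vee)\neq0$, and together with global generation this exhibits $\cO_X$ as a direct summand, so $\cE\cong\cO_X$. The non-vanishing $h^{n}\bigl(\cE\tw{-(n+1)}{c-1}\bigr)\neq0$ is treated as in case~(ii) of the proof of Theorem~\ref{thm:splittingOfh}: tensoring \eqref{eq:Eulerexterior} and the Koszul sequence \eqref{eq:pbKoszul1} by suitable twists of $\cE$ and of $\cE^\vee$ and using the vanishing in (c) and (d), I would produce nonzero maps $\cE\to\cO_X(\fib)$ and $\cO_X(\fib)\to\cE$ whose composite is forced to be an isomorphism by the non-degeneracy of the Serre-duality pairing, giving $\cE\cong\cO_X(\fib)$. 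The lowest intermediate instances reproduce $\cO_X(\Hyp-\fib)$ in the same way, exactly as in case~(iii) of Theorem~\ref{thm:splittingOfh} (compare Corollary~\ref{cor:reg0}).

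The genuinely new case is $h^{i}\bigl(\cE\tw{-(i+1)}{i}\bigr)\neq0$ with $1<i<n$, where the summand to be produced is $B:=\Omega_{X|\PP^m}^i\tw{i+1}{-(i+1)}$. Here the idea is to replace the full exterior Euler complex by its two truncations \eqref{eq:trunc1} and \eqref{eq:trunc2}. Tensoring \eqref{eq:trunc2} by an appropriate twist of $\cE$ and \eqref{eq:trunc1} by an appropriate twist of $\cE^\vee$, and killing all the intermediate sums of line bundles by means of the vanishing collected in (d) and (e), I would propagate the given non-vanishing to $H^0\bigl(\Hom(\cE,B)\bigr)\neq0$ and $H^0\bigl(\Hom(B,\cE)\bigr)\neq0$, the passage between the two directions using the self-duality $(\Omega_{X|\PP^m}^{i})^\vee\tw{-(n+1)}{c}\cong\Omega_{X|\PP^m}^{n-i}$ established just before the statement. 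As in the commutative diagram of Theorem~\ref{thm:splittingOfh}, a Serre-duality pairing then shows that the composite endomorphism of $B$ is nonzero. Finally, because $\Omega_{X|\PP^m}^i$ restricts to the simple bundle $\Omega^i_{\PP^n}$ on the fibres of $\pi$, the pushforward of its endomorphism sheaf is $\cO_{\PP^m}$, so $\Hom(B,B)\cong H^0(\PP^m,\cO_{\PP^m})$ is one-dimensional; hence $B$ is simple and the nonzero endomorphism is an isomorphism. This exhibits $B$ as a direct summand, and indecomposability gives $\cE\cong B$.

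The main obstacle I expect is the middle step of this last case: organising the two truncated complexes so that the hypotheses (d) and (e) genuinely annihilate \emph{every} intermediate cohomology group at \emph{every} spot of \eqref{eq:trunc1} and \eqref{eq:trunc2}, and then verifying that the Serre-duality pairing between $\Hom(\cE,B)$ and $\Hom(B,\cE)$ is non-degenerate, so that the two maps one constructs are inverse up to a scalar rather than merely both nonzero. The precise matching of which $a_I$ and which cohomological degree occurs at each term of the two complexes is exactly what assumptions (d) and (e) are designed to control, and carrying this out uniformly in $i$ is the crux of the argument.
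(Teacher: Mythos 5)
Your overall architecture is the same as the paper's: locate the failure of regularity of $\cE(-\Hyp)$, split off $\cO_X$ and $\cO_X(\fib)$ exactly as in Theorems~\ref{thm:splittingO} and~\ref{thm:splittingOfh}, and in the remaining cases run the truncated Euler complexes \eqref{eq:trunc1}--\eqref{eq:trunc2} together with \eqref{eq:pbKoszul1}, the duality $(\Omega_{X|\PP^m}^{i})^\vee\tw{-(n+1)}{c}\cong\Omega_{X|\PP^m}^{n-i}$ and a Serre-duality pairing to produce the summand $B=\Omega_{X|\PP^m}^i\tw{i+1}{-(i+1)}$; your simplicity argument for $B$ via $\pi_*\mathcal{E}nd(B)\cong\cO_{\PP^m}$ is a correct (and welcome) elaboration of a step the paper leaves implicit.

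However, there is a genuine gap in your case analysis: you identify the residual obstruction groups as the $j=0$ instances $h^{i}\bigl(\cE\tw{-(i+1)}{i}\bigr)$, $1\le i<n$, whereas the groups that actually detect the non-trivial summands are the $j=m$ instances, namely $h^{m}(\cE\tw{-1}{-m})$ (which yields $\cO_X(\Hyp-\fib)$, as in case (iii) of Theorem~\ref{thm:splittingOfh}) and $h^{i+m}\bigl(\cE\tw{-(i+1)}{i-m}\bigr)$ for $0<i<n$ (which yields $B$). This is forced by a direct check on the candidates: since $R^{j'}\pi_*\Omega^i_{X|\PP^m}$ vanishes except for $R^i\pi_*\Omega^i_{X|\PP^m}\cong\cO_{\PP^m}$, one has $h^{i+j}\bigl(B\tw{-(i+1)}{i-j}\bigr)=h^j(\PP^m,\cO_{\PP^m}(-j-1))$, which is nonzero only for $j=m$; in particular your proposed starting group $h^{i}\bigl(\cE\tw{-(i+1)}{i}\bigr)$ vanishes for $\cE=B$ itself, so no argument can extract $B$ from its non-vanishing. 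Concretely, your cohomology chase cannot work as described: a class in degree $i$ cannot be carried down to $H^0$ of the cokernel through the length-$(m+i)$ complex obtained by splicing \eqref{eq:pbKoszul1} with the dual of \eqref{eq:trunc2} (one needs to start in degree $i+m$, which is exactly what the paper does), and the $\cO_X(\Hyp-\fib)$ branch does not occur anywhere in your list, so your assertion that it is ``reproduced as in case (iii)'' has no starting non-vanishing to feed it. Part of the blame lies with the stated range $1\le j\le m$ in hypothesis (b), which read literally would kill the $j=m$ groups and make the theorem collapse (it should be $1\le j<m$, by analogy with Theorem~\ref{thm:splittingOfh}); but the misreading is detectable, and should have been corrected, by verifying which of the listed vanishings the four candidate bundles actually satisfy.
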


\begin{proof}
Comparing the definition of regularity with (a) and the first part of
(b) we see that $\cE(-\Hyp)$ is not regular if and only if one of the
following conditions is satisfied:
\begin{enumerate}[(i)]
\item $h^{n+m}\bigl(\cE\tw{-(n+1)}{c-m-1}\bigr)\neq 0$,
\item $h^n\bigl(\cE\tw{-(n+1)}{c-1}\bigr)\neq 0$,
\item $h^m(\cE\tw{-1}{-m})\neq 0$, 
\item $h^{i+m}\bigl(\cE\tw{-(i+1)}{i-m}\bigr)\neq 0$ for some $i$ with
  $0<i<n$.
\end{enumerate}
Conditions (i)--(iii) were considered in
Theorem~\ref{thm:splittingOfh} (notice that the twists we used in
conditions (c), (d) in the proof of Theorem~\ref{thm:splittingOfh} are
exactly those of conditions (c), (d) and (e) with $i=k=1$).

So we consider case (iv). Suppose that $0<i<n$ and
$h^{i+m}\bigl(\cE\tw{-(i+1)}{i-m}\bigr)\neq 0$. Let us consider the
following exact sequence, obtained from the dual of \eqref{eq:trunc2}
tensored by $\cE\tw{-(i+1)}{a+1}$ and \eqref{eq:pbKoszul1} tensored by
$\cE\tw{-(i+1)}{i}$:
\begin{equation*}
  \begin{aligned}
0 &\to \cE\tw{-(i+1)}{i-m} \to \cE\tw{-(i+1)}{i+1-m}^{e_m}\to\dots \to
\cE\tw{-(i+1)}{i}^{e_1}\to\\
&\to\bigoplus_{|I|=1}\cE\tw{-i}{i+1-a_I}\to\dots\to
\bigoplus_{|I|=i}\cE\tw{-1}{i+1-a_I}\to
         [\Omega_{X|\PP^m}^i]^\vee\otimes\cE\tw{-(i+1)}{i+1} \to 0.
  \end{aligned}
\end{equation*}
The second part of (b) gives
\[
h^{i+m}\bigl(\cE\tw{-(i+1)}{i+1-m}\bigr)=\dots =
h^{i+1}\bigl(\cE\tw{-(i+1)}{i}\bigr)=0
\]
and (e) gives
\[
h^i\bigl(\bigoplus_{|I|=1}\cE\tw{-i}{i+1-a_I}\bigr)=\dots
=h^1\bigl(\bigoplus_{|I|=i}\cE\tw{-1}{i+1-a_I}\bigr),
\]
so we get $h^0\bigl(\cE\otimes
\left(\Omega_{X|\PP^m}^i\tw{i+1}{-(i+1)}\right)^\vee\bigr)\neq 0$.

By Serre duality, $h^{i+m}\bigl(\cE\tw{-(i+1)}{i-m}\bigr)=
h^{n-i}\bigl(\cE^\vee\tw{-(n-i)}{c-i-1}\bigr)$.  Let us consider the
exact sequence obtained from \eqref{eq:trunc1} tensored by
$\cE^\vee\tw{1+i}{-(i+1)}$:
\begin{equation*}
\begin{aligned}
0&\to \cE^\vee\tw{-(n-i)}{c-i-1}\to \bigoplus_{|I|=n}
\cE^\vee\tw{-(n-i-1)}{a_I-i-1}\to\dots\\
&\to\bigoplus_{|I|=i+1}
\cE^\vee((a_I-i-1)\fib))\to \cE^\vee\otimes
\Omega_{X|\PP^m}^i\tw{i+1}{-(i+1)})\to 0.
\end{aligned}
\end{equation*}
By (e),
\[
h^{n-i}\bigl(\bigoplus_{|I|=n}
\cE^\vee\tw{-(n-i-1)}{a_I-i-1}\bigr)=\dots
=h^1\bigl(\bigoplus_{|I|=i+1} \cE^\vee((a_I-i+1)\fib))\bigr)=0,
\]
so we also get
$h^0\bigl(\cE^\vee\otimes\Omega_{X|\PP^m}^{i}\tw{-(i+1)}{i+1}\bigr)\neq
0$. From this we conclude that $\cE\cong
\Omega_{X|\PP^m}^{i}\tw{-(i+1)}{i+1}$.
\end{proof}

If $c=n+1$, so $X=\PP^n\times\PP^m$, then
Theorem~\ref{thm:indecomposibleregular} is similar to \cite[Theorem
 3.5]{bm2}.

\begin{remark}\label{rem:brown_sayrafi}
A splitting criterion for vector bundles on scrolls is given
in~\cite[Theorem~1.5]{BS}, proved by a spectral sequence
argument. That result and ours do not seem to be directly
comparable. Theorem~\ref{thm:splittingO} and
Theorem~\ref{thm:splittingOfh} use less information about the
cohomology: in particular, only some vanishing, whereas the result
in~\cite{BS} assumes a priori agreement between the dimensions of all
the cohomology spaces of $\cE$ and a split bundle. On the other hand,
our results apply to a more restricted class of bundles.
\end{remark}

In the case of rational normal scrolls
$X=\PP(\cO_{\PP^1}(a_0)\oplus\dots \oplus\cO_{\PP^1}(a_n)) $, namely
when $m=1$, the above splitting criteria
(Theorems~\ref{thm:splittingO}--\ref{thm:indecomposibleregular})
become much simpler
(Corollaries~\ref{coro:rnssplittingO}--\ref{coro:rnsindecomposibleregular}).

\begin{corollary}\label{coro:rnssplittingO}
Let $\cE$ be a rank $r$ vector bundle on a rational normal scroll
$X$. Then the following conditions are equivalent:
\begin{enumerate}[(i)]
\item for any integer $t$ we have the vanishing
  \begin{enumerate}[(a)]
    \item $h^n(\cE\tw{t}{c-1})= 0$,
    \item $h^{i+j}(\cE\tw{t}{i-j})=0$ for $j=0,\,1$ and $0\leq i<n$
      but $(i,j)\neq(0,0)$;
  \end{enumerate}
\item there are $r$ integers $t_1, \dots, t_r$ such that $\cE\cong
  \bigoplus_{i=1}^r \cO_X(t_i\Hyp)$.
\end{enumerate}
\end{corollary}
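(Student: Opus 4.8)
The plan is to recognise that this corollary is nothing more than Theorem~\ref{thm:splittingO} specialised to the case $m=1$, so that the proof reduces to checking that the vanishing conditions of that theorem collapse to exactly conditions (a) and (b) stated here.

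First I would observe that a rational normal scroll is, in the terminology of this paper, precisely a positive scroll with $m=1$, and since the standing assumption of this section is $n>0$, the hypotheses of Theorem~\ref{thm:splittingO} are met. Next I would substitute $m=1$ into condition (i) of that theorem. In part (a) the index range $0\le j<m$ reduces to the single value $j=0$, giving precisely $h^n(\cE\tw{t}{c-1})=0$, which is condition (a) of the corollary. In part (b) the range $0\le j\le m$ becomes $j\in\{0,1\}$, while the constraints $0\le i<n$ and $(i,j)\neq(0,0)$ are left unchanged; this is exactly condition (b) here. Since condition (ii) coincides verbatim with (ii) of Theorem~\ref{thm:splittingO}, the asserted equivalence follows at once from that theorem.

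There is no genuine obstacle in this argument. The only point requiring any care is the bookkeeping of the index ranges, and this translation is entirely mechanical once $m$ is set equal to $1$; in particular no new cohomological vanishing need be established beyond what Theorem~\ref{thm:splittingO} already provides.
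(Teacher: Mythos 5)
Your proposal is correct and matches the paper's intent exactly: the paper offers no separate proof of Corollary~\ref{coro:rnssplittingO}, simply noting that it is Theorem~\ref{thm:splittingO} specialised to $m=1$, and your index bookkeeping (part (a) collapsing to $j=0$, part (b) to $j\in\{0,1\}$) is the whole content of that specialisation. Nothing further is needed.
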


\begin{corollary}\label{coro:rnssplittingOfh}
Let $\cE$ be a vector bundle on a rational normal scroll $X$.  Then
$E$ is a direct sum of line bundles $\cO_X$, $\cO_X(\fib)$ and
$\cO_X(\Hyp-\fib)$ with some twist $t\Hyp$ if and only if, for any
integer $t$,
\begin{enumerate}
\item[(b)] $h^{i+j}(\cE\tw{t}{i-j})= 0$ for $0\leq i< n$ and $j=0,\,1$
  but $(i,j)\not=(0,0),\,(0,1)$,
\item[(d)] $h^{|I|}(\cE\tw{t}{a_I-1})=h^{|I|}(\cE^\vee\tw{t}{a_I-1})=
  0$ if $1\leq |I|\leq n$.
\end{enumerate}
\end{corollary}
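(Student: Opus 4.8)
The plan is to obtain this as the $m=1$ case of Theorem~\ref{thm:splittingOfh}. The ``only if'' implication needs no work: the three bundles $\cO_X$, $\cO_X(\fib)$, $\cO_X(\Hyp-\fib)$, and hence all their $\Hyp$-twists, were already checked in the proof of Theorem~\ref{thm:splittingOfh} to satisfy every required vanishing, so in particular they satisfy (b) and (d). For the ``if'' implication I want to apply Theorem~\ref{thm:splittingOfh}, so I must recover its four hypotheses (a)--(d) from the corollary's (b) and (d) alone.

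Setting $m=1$, condition (a) of Theorem~\ref{thm:splittingOfh} ranges over the empty set $1\le j<1$ and so is vacuous, while (b) and (d) are literally the corollary's (b) and (d). The only condition not immediately present is (c), which for $m=1$ reads $h^1(\cE^\vee\tw{t}{0})=0$ for every $t$. By Serre duality on $X$, where $\dim X=n+1$ and $\omega_X\cong\cO_X\tw{-(n+1)}{c-2}$, this is equivalent to
\[
h^n(\cE\tw{s}{c-2})=0\quad\text{for all }s.
\]
I would try to read this off condition (d): with $|I|=n$ and $a_I=c-a_k$ for the omitted index $k$, (d) gives $h^n(\cE\tw{s}{c-a_k-1})=0$ for all $s$ and all $k$, so in particular at fibre degree $c-a_0-1$.

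The hard part is closing the gap between fibre degree $c-a_0-1$ and the required $c-2$ (these agree only when $a_0=1$). The natural tool is the pulled-back Koszul sequence \eqref{eq:pbKoszul1}, which for $m=1$ is $0\to\cO_X(-\fib)\to\cO_X^{2}\to\cO_X(\fib)\to0$ and, twisted by $\cE\tw{s}{b}$, yields $0\to\cE\tw{s}{b-1}\to\cE\tw{s}{b}^{2}\to\cE\tw{s}{b+1}\to0$; one would climb from $b=c-a_0-1$ up to $b=c-2$, the step from $b$ to $b+1$ being legitimate as soon as the neighbouring group $h^{n+1}(\cE\tw{s}{b-1})$ vanishes. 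Supplying those top-degree vanishings is exactly the delicate point, and is where I expect to have to bring in the normalisation of the proof (that $\cE$ is regular at the transition twist, so that Lemma~\ref{lem:positivetwistvanishing} and Lemma~\ref{lem:0qregular} apply) rather than (b),(d) in the abstract. Granting (c), the argument then runs verbatim as in Theorem~\ref{thm:splittingOfh}: choose $t$ with $\cE(t\Hyp)$ regular but $\cE((t-1)\Hyp)$ not and normalise $t=0$; by Remark~\ref{rk:rnsreg}, condition (b) kills every interior failure of regularity, leaving only the three corners
\[
h^{n+1}(\cE\tw{-(n+1)}{c-2})\ne0,\quad h^n(\cE\tw{-(n+1)}{c-1})\ne0,\quad h^1(\cE\tw{-1}{-1})\ne0,
\]
which split off $\cO_X$, $\cO_X(\fib)$ and $\cO_X(\Hyp-\fib)$ respectively.
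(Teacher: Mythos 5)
Your route is the paper's own: the corollary is presented as the $m=1$ specialisation of Theorem~\ref{thm:splittingOfh}, and you have correctly checked that hypothesis (a) of that theorem becomes vacuous while (b) and (d) specialise to the corollary's (b) and (d). The genuine gap is exactly the one you flag and do not close: hypothesis (c) of the theorem, which for $m=1$ reads $h^1(\cE^\vee(t\Hyp))=0$ for all $t$ (equivalently, by Serre duality, $h^n(\cE\tw{s}{c-2})=0$ for all $s$), is absent from the corollary and does not follow from (b) and (d). Condition (d) with $|I|=n$ gives only $h^n(\cE\tw{s}{c-a_k-1})=0$, which reaches fibre degree $c-2$ precisely when some $a_k=1$; and your Koszul climb from $c-a_0-1$ to $c-2$ needs vanishing of $h^{n+1}(\cE\tw{s}{\ast})$ at $\Hyp$-twist $s=-(n+1)$, which neither (b), (d), nor the regularity of $\cE$ at the transition twist controls (regularity of $\cE$ only governs twists $\ge -n$ in $\Hyp$). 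Condition (c) is used at an essential point of case (ii) of the theorem's proof: it is what makes $H^0(\cE^\vee(\fib))\to H^1(\cE^\vee(-\fib))$ surjective and hence produces the map $\cE\to\cO_X(\fib)$ that splits off $\cO_X(\fib)$.

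In fact the gap cannot be closed, because the statement is false as printed once $a_0\ge 2$. Take $X=\PP(\cO_{\PP^1}(2)\oplus\cO_{\PP^1}(2))\cong\PP^1\times\PP^1$, so $n=1$, $c=4$, and in bidegree notation with second entry the $\fib$-degree one has $\cO_X\tw{a}{b}=\cO(a,2a+b)$. Let $\cE=\cO_X(2\fib)=\cO(0,2)$. For $n=1$ condition (b) ranges over the empty set, and (d) asks only that $h^1(\cO(t,2t+3))=h^1(\cO(t,2t-1))=0$ for all $t$, which holds by the K\"unneth formula. Yet $\cO_X(2\fib)$ is not a sum of $\Hyp$-twists of $\cO_X$, $\cO_X(\fib)$, $\cO_X(\Hyp-\fib)$, i.e.\ of the $\cO(t,2t)$, $\cO(t,2t+1)$, $\cO(t+1,2t+1)$. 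What fails is exactly (c): $h^1(\cE^\vee)=h^1(\cO(0,-2))=1\neq 0$, and correspondingly $H^0(\cE^\vee(\fib))=0$, so no map $\cE\to\cO_X(\fib)$ exists even though $h^1(\cE\tw{-2}{3})\neq 0$ places one in case (ii). So your instinct is right: the corollary must retain hypothesis (c) (or assume $a_0=1$, in which case (c) is the $|I|=1$ instance of (d)); with (c) restored, your argument is complete and is the intended one.
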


\begin{corollary}\label{coro:rnsindecomposibleregular}
Let $\cE$ be an indecomposible vector bundle on a rational normal
scroll $X$ with $\Reg(\cE)=0$.  Suppose that the following vanishing
occurs.
\begin{enumerate}
\item[(b)]
  $h^{i+1}\bigl(\cE\tw{-(i+1)}{i-1}\bigr)=h^{i+1}\bigl(\cE\tw{-(i+1)}{i}\bigr)=0$
  for $0\leq i< n$,
\item[(d)] $h^{|I|}\bigl(\cE\tw{-|I|}{a_I-1}\bigr)=
  h^{|I|}\bigl(\cE^\vee\tw{-|I|+1}{a_I-1}\bigr)=0$ if $1\leq |I|\leq
  n$,
\item[(e)] $\begin{aligned}[t]
  h^k\bigl(\bigoplus\limits_{|I|=1-k+i}\cE\tw{-k}{k+1-a_I}\bigr)&=0
  \text{ for }0< k\le i <n\text{ and
  }\\ h^k\bigl(\bigoplus\limits_{|I|=k+1}
  \cE^\vee\tw{-(k-1)}{a_I-i-1}\bigr)&=0\text{ for }0< k\le n-i <n.
\end{aligned}$
\end{enumerate}
Then $\cE\cong \cO_X$, or $\cE\cong\cO_X(\fib)$, or
$\cE\cong\cO_X(\Hyp-\fib)$, or $\cE\cong
\Omega_{X|\PP^m}^i\tw{i+1}{-(i+1)}$ with $1<i<n$.
\end{corollary}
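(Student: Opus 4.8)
The plan is to read this off from Theorem~\ref{thm:indecomposibleregular} by specialising to $m=1$, checking that the five hypotheses (a)--(e) of that theorem collapse exactly to the three listed here. Hypothesis (a) ranges over $1\le j<m$, which is empty for $m=1$, so it is vacuous. Hypothesis (b) of the theorem involves only $j=1$ when $m=1$, and its two equalities then become the two equalities of (b) here; hypotheses (d) and (e) are literally unchanged. This leaves only hypothesis (c), namely $h^1(\cE^\vee)=h^1(\cE(-\Hyp))=0$, which is not among the listed conditions, so the first task is to recover it (for the relevant range of twists) from (b), (d), (e) and $\Reg(\cE)=0$.

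The vanishing $h^1(\cE(-\Hyp))=0$ I would simply read off from the case $i=0$ of the second equality in (b), since $\cE(-\Hyp)=\cE\tw{-1}{0}$. The vanishing $h^1(\cE^\vee)=0$ is the delicate point. Because $\dim X=n+1$ and $\omega_X\cong\cO_X\tw{-(n+1)}{c-2}$ when $m=1$, Serre duality turns it into $h^n(\cE\tw{-(n+1)}{c-2})=0$. I would compute this from the exterior Euler resolution \eqref{eq:trunc1} with $i=0$, tensored by $\cO_X(-2\fib)$, so that its left-hand term is $\cE\tw{-(n+1)}{c-2}$; chasing the associated hypercohomology spectral sequence reduces the required vanishing to that of the groups $h^{k}(\cE\tw{-k}{a_I-2})$ for $|I|=k$ and $1\le k\le n$. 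Each of these I would kill by feeding condition (d), which supplies $h^{k}(\cE\tw{-k}{a_I-1})=0$, together with (e), into the short $\PP^1$-Koszul sequence \eqref{eq:pbKoszul1}. Bridging this one-unit gap in the fibre twist is where I expect the main obstacle to lie, since the naive comparison with (d) is off by one.

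With (c) secured, the theorem applies, but I would in any case rephrase the argument through the simplified regularity criterion of Remark~\ref{rk:rnsreg}. Since $\Reg(\cE)=0$, the bundle $\cE(-\Hyp)$ fails to be regular, and by Remark~\ref{rk:rnsreg} this failure is located in one of finitely many cohomology groups; conditions (b), (d), (e) eliminate all but the $m=1$ shadows of the four cases (i)--(iv) in the proof of Theorem~\ref{thm:indecomposibleregular}. Each surviving case produces a direct summand: $\cO_X$ from $h^{n+1}(\cE\tw{-(n+1)}{c-2})\neq0$ via $h^0(\cE^\vee)\neq0$ and global generation (Corollary~\ref{cor:globalgen}); $\cO_X(\fib)$ and $\cO_X(\Hyp-\fib)$ from the pairing arguments of Theorem~\ref{thm:splittingOfh}, now run with the length-two sequence \eqref{eq:pbKoszul1} in place of the general Koszul complex; and $\Omega_{X|\PP^1}^i\tw{i+1}{-(i+1)}$ with $1<i<n$ from the truncations \eqref{eq:trunc1} and \eqref{eq:trunc2} tensored by $\cE$ and $\cE^\vee$ together with (e). Finally, since $\cE$ is indecomposable it must coincide with the summand produced, yielding one of the four bundles listed.
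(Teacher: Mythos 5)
Your overall strategy --- reading the corollary off from Theorem~\ref{thm:indecomposibleregular} by setting $m=1$ --- is exactly what the paper intends (it offers no separate proof of the corollary), and your bookkeeping is correct: hypothesis (a) of the theorem is vacuous for $m=1$, (b) collapses to the two stated equalities, (d) and (e) carry over verbatim, and $h^1(\cE(-\Hyp))=0$ is indeed the case $i=0$, $j=1$ of the second equality in (b). You have also correctly located the one genuine problem: the theorem's hypothesis (c) contains the further condition $h^1(\cE^\vee)=0$, which is absent from the corollary and which the proof really uses. In case (ii) it is precisely what makes $H^0(\cE^\vee(\fib))\to H^1(\cE^\vee(-\fib))$ surjective in the long exact sequence of $0\to\cE^\vee(-\fib)\to(\cE^\vee)^2\to\cE^\vee(\fib)\to 0$, hence what produces the nonzero map $g\colon\cO_X(\fib)\to\cE$ and the nonvanishing of $\tau$ in the pairing diagram.

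The step you flag as the main obstacle is a genuine gap, and it cannot be closed from (b), (d), (e) and $\Reg(\cE)=0$ alone. Your reduction is right: Serre duality gives $h^1(\cE^\vee)=h^n\bigl(\cE\tw{-(n+1)}{c-2}\bigr)$, and chasing \eqref{eq:Eulerexterior} tensored by $\cE\tw{-1}{-2}$ reduces this to $h^k\bigl(\bigoplus_{|I|=k}\cE\tw{-k}{a_I-2}\bigr)=0$ for $1\le k\le n$ (plus $h^0(\cE(-2\fib))=0$). But condition (d) controls only the twist $a_I-1$; the restriction sequence to a fibre $\fdiv\cong\PP^n$ bridges the one-unit gap only at the price of $h^{k-1}(\cE_{|\fdiv}(-k\Hyp))=0$, which is one step outside what regularity of $\cE_{|\fdiv}$ on $\PP^n$ provides, and the length-two Koszul sequence \eqref{eq:pbKoszul1} runs into the same off-by-one. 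The one situation in which the listed hypotheses do supply the vanishing is when some $a_i=1$: then (d) with $|I|=\{i\}$ gives $h^1(\cE^\vee((a_i-1)\fib))=h^1(\cE^\vee)=0$ directly. For a general rational normal scroll you should simply carry over the $m=1$ instance of the theorem's condition (c), namely $h^1(\cE^\vee)=0$, as an additional hypothesis (its companion $h^1(\cE(-\Hyp))=0$ is already implied by (b), as you note); with that added, your argument is complete and coincides with the paper's.
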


\bibliographystyle{amsplain}

\end{document}